\newcommand{\R}{\mathbb{R}}
\newcommand{\grad}{\nabla}
\theoremstyle{definition} 
\theoremstyle{remark}
\newtheorem{Theorem}{Theorem}
\newtheorem{Corollary}[theorem]{Corollary}
\newtheorem{Lemma}[theorem]{Lemma}
\newtheorem{exmp}{Example}[theorem]
\newlength\myindent
\begin{document} 

\title{A Deterministic Nonsmooth Frank Wolfe Algorithm with Coreset Guarantees}

\author{\name Sathya N. Ravi \email ravi5@wisc.edu \\
       \addr Department of Computer Sciences\\
       University of Wisconsin Madison
       \AND
       \name Maxwell D. Collins \email mcollins@cs.wisc.edu \\
       \addr Department of Computer Sciences \\
       University of Wisconsin Madison
       \AND
       \name Vikas Singh \email vsingh@biostat.wisc.edu \\
       \addr Department of Biostatistics and 
       Department of Computer Sciences \\
       University of Wisconsin Madison\\
       \AND
       \addr Summary as a blog post: \href{https://sravi-uwmadison.github.io/2017/08/21/corensfw/}{https://sravi-uwmadison.github.io}
       }       
\editor{}

\maketitle

\begin{abstract}
  We present a new Frank-Wolfe (FW) type algorithm that
  is applicable to minimization problems with a nonsmooth convex objective.
  We provide convergence bounds and show that the scheme yields so-called {\em coreset} results for various Machine
  Learning
  problems including
  1-median, Balanced Development, Sparse PCA,
  Graph Cuts,
  and the $\ell_1$-norm-regularized Support Vector Machine (SVM) among others.
  This means that the algorithm provides approximate solutions to these problems
  in time complexity bounds that are {\em not dependent} on the size of the
  input problem. 
  Our framework, motivated by a growing body of work on {\em sublinear} algorithms for various data analysis problems, 
  is entirely deterministic and makes no use of smoothing or proximal operators. 
  Apart from these theoretical results, we 
  show experimentally
  that the algorithm is very practical and in some cases also offers significant computational advantages on large problem instances. 
  We provide an open source implementation that can be adapted for other problems that fit the 
  overall structure.
\end{abstract}


\section{Introduction}

The impact of numerical optimization on modern data  analysis has been quite significant -- today, these methods lie at the
heart of most statistical
machine learning applications in domains spanning genomics \cite{banerjee2006convex}, finance \cite{Pennanen2012} and medicine \cite{liu2009blockwise}. 
The expanding scope of these applications (and the complexity of the associated data) has continued to raise the expectations of the 
efficiency profile of the underlying algorithms which drive the analyses modules. 
For instance, the development of ``low-order'' polynomial time algorithms has long been a central focus 
of algorithmic research; the availability of a (near) linear time algorithm for a problem was considered a gold standard since 
any algorithm's runtime should, at a minimum, include the time it takes to evaluate the input data in its entirety.
But as applications that generate extremely large data sets become more prevalent, 
we encounter many practical scenarios where even a procedure that takes only linear time to process the data may be considered impractical \cite{sarlos2006improved,ClarksonW15}. Within the last decade or so, efforts 
to understand whether the standard notions of an efficient algorithm are sufficient and/or whether linear time algorithms are good enough for various turnkey applications has 
led to the study of so-called ``sublinear'' algorithms and a number of interesting results have emerged \cite{clarkson2012sublinear,rudelson2007sampling}. 

Sublinear computation is based upon the premise that a fast (albeit approximate) solution may, in some cases, be preferrable to the optimal solution 
obtained at a higher computational or financial cost. The strategy is a good fit in at least two different regimes: (a) operations on streaming 
data where an algorithm must run in real time (an inexact but 
prompt answer may suffice) \cite{BravermanC14} or (b) where the appropriate type of data (for the task) is scarce or unavailable at sample sizes necessary to utilize standard statistical tests (e.g., $\chi^2$)\cite{acharya2015optimal}. 
In either case, an important feature of most sublinear algorithms is the use of an {\em approximated version} of the decision version of the original problem.
Many of the technical results,
for specific problems/tasks have focused on deriving so-called ``property testing'' schemes as a means to establish 
what an algorithm can be expected to accomplish without reading through all of the data. 
This line of work has led to many fast sublinear algorithms for numerical linear algebra problems including matrix multiplication \cite{drineas2006fast}, computing spectral norms and leading singular vectors of the data matrix \cite{drineas2006fast} among others,  
and has been deployed in settings where the full data may have a large memory footprint. 
More recently, several authors have even shown how to extend this idea for solving convex optimization problems. 
In particular, the work by Clarkson \cite{clarkson2008-frank-wolfe} which motivates our paper obtains sublinear algorithms for a variety of problems where the feasible set is the unit simplex. 
Next, we briefly describe this framework and its relationship to {\em Coresets}, an idea often used in the design and analysis of algorithms in computational geometry.

In an important result a few years ago \cite{clarkson2008-frank-wolfe}, the authors showed that sublinear algorithms can be designed for a variety of problems using the framework of the 
well known Frank-Wolfe (FW) algorithm \cite{frank-wolfe1956}. Recall that FW algorithms can be used to solve smooth convex optimization problems when the feasible set is \emph{compact}. 
Using this framework, Clarkson unified the analysis of various problems in machine learning and statistics including regression, boosting, and density mixture estimation 
via reformulations as optimization problems with a smooth convex objective function and where the unit simplex was the feasible set. 
This setup was then shown to provide immediate results to important theoretical and practical issues including sample complexity bounds 
and faster algorithms for Support Vector Machines and approximation bounds for boosting procedures. 
This result raises an interesting question: what properties of FW algorithms enable one to design fast algorithms with provable guarantees? 
Clarkson addressed this question by drawing a contrast between FW algorithms and a (widely used) alternative strategy, i.e., projected gradient type algorithms.  
Specifically, instead of a quadratic function, in FW algorithms, we optimize a linear function over the feasible set which often yields 
a better per iteration complexity for many problems (further, the iterates always remain feasible). 
It turns out that these properties are particularly useful from a theoretical computer science/optimization point of view since 
one can frequently obtain pipelines with significantly better \emph{memory} complexity \cite{garber_linear_2016, gidel2017fwsaddle}, as we discuss next. 

An interesting consequence of the above work was a result showing an nice relationship of FW-type schemes to a concept predominantly used within computational geometry known
as \emph{coresets} \cite{agarwal2005geometric}. 
Intuitively, a coreset (typically defined in the context of a problem statement) is a subset of the input data on which an algorithm with provable
approximation guarantees for the problem at hand can be obtained which has a runtime/memory complexity that is, in some sense, independent of (or only loosely dependent on) the size of the dataset. 
 A key practical consequence of the analysis in \cite{clarkson2012sublinear} was that coresets for SVMs derived using the proposed procedure were tighter than all earlier results \cite{tsang2005core}.
For other machine learning problems which can be solved using convex optimization included in \cite{clarkson2012sublinear}, the analysis typically allows bounding
the number of nonzero elements in the decision variables (independent of the
total number of such variables).
Clearly these are very useful properties --- but we see that the setup in \cite{clarkson2012sublinear} requires that the objective function of the optimization problem be \emph{differentiable};
this is somewhat restrictive for machine learning and computer vision applications, where various nonsmooth regularizers are ubiquitous and serve
to impose structural or statistical requirements on the optimal solution \cite{bach2012optimization}. 
The overarching goal of our paper is to obtain an algorithm (very similar to the standard FW procedure) which makes no such assumption. 
Incidentally, we are also able to obtain coreset-type results for a much broader class of problems (that involve nonsmooth objective functions) --- 
providing sublinear algorithms for several of these applications and sensible heuristics for others. 
\subsection{Overview of this work and related results}

We first define the problem considered here and give an overview of some of the related works. 
Let $f$ be a convex and continuous real-valued function (but \emph{not} necessarily continuously differentiable)
over a compact convex domain $D$.
We study a specific class of finite dimensional optimization problems expressed as,
\begin{equation}
  \min_{x \in D} f(x)
  \label{eq:primal-problem}
\end{equation}
Our central result gives a new convergence bound for an algorithm  
loosely derived from a scheme first outlined 
in \cite{white1993-nondifferentiable} in the early 1990s but which seems to have been utilized
only sparingly in the literature since. 
At a high level, our deterministic method generates a sequence of iterates
$x_k$ for $k = \{0,1,2,...\}$ that provably converge in the \emph{primal--dual} gap.
Specifically, if $x_*$ is a solution to \eqref{eq:primal-problem}
we show that $f(x_k) \le f(x_*) + O(1/\sqrt{k})$,
thereby providing an \emph{a priori} bound on the suboptimality of each
iterate.
We can restate this bound, for any choice of $\epsilon > 0$ and 
\begin{equation}
  f(x_k) \le f(x_*) + \epsilon \text{ for any } k \ge K
  \label{eq:simple-bounds}
\end{equation}
for some $K \in O(1/\epsilon^2)$.

{\em Relevance.} A wide range of problems can be expressed in the form shown in
\eqref{eq:primal-problem},
and for some of these problems, bounds as in \eqref{eq:simple-bounds}
will also yield a  coreset result.
As briefly described in the previous section, within a coreset-based algorithm,
we can perform training on a very large numbers of examples
while requiring computational resources that depend on the intrinsic hardness
of the problem \emph{regardless of the amount of data collected}. We note that while 
a subsampling heuristic may show this behavior for some datasets (and for some objective functions), 
in general, a coreset (if it exists) will typically yield a stronger
approximation than ordinary subsampling. For example, coreset results have been used for
$k$-means and $k$-median clustering \cite{har-peled2005-smaller-kmeans},
subspace approximation \cite{feldman2010-subspace}, support vector machines and its variants \cite{tsang2005core, streaming_svm_coreset_2014, har2007maximum}, and robotics \cite{feldman2012-sensor-networks, balcan2013distributed}. 
For several problems we discuss here, convergence bounds on the optimization model 
show that a solution $x_K$ can be found with no more than
$K$ nonzero entries,
and these nonzeros correspond to the choice of a coreset as mentioned above.
Another consequence of such a result is that the coreset bound will be deterministic,
giving a \emph{tight} bound on the actual approximation error,
as opposed to a bound on the \emph{expected} approximation error. For some problems, our results also translate well to practice, yielding fast implementations discussed in 
the experimental section.
{\em Relation to existing work.} A small set of results pertaining to nonsmooth Frank Wolfe algorithms (and our proposed ideas) have been reported in the literature 
in the last few years. At the high level,
these approaches correspond to three different flavors: 
(1) noisy gradients: \cite{jaggi2013-frank-wolfe} considers the case where the
FW method uses an inexact gradient such that
the linear subproblems provide a solution to the exact problem within
a bounded error but assume that the objective function is \emph{smooth}; (2)  dualizing: 
\cite{jaggi2011-thesis} presented a FW dual for nonsmooth
functions employing the \emph{subgradients} of the objective.
This can be used to generate a ``certificate'' of optimality of an approximate
solution via the primal-dual gap but the algorithms do not deal with the nonsmoothness directly. In particular, for the applications studied in \cite{jaggi2011-thesis}, the nonsmooth parts in the objective function are dualized instead and treated as constraints. This approach becomes inefficient since the subproblems are complicated for many other important applications that are considered here; (3) smoothing: \cite{hazan2012-random-smoothing} presents an algorithm
for minimization of nonsmooth functions with bounded regret that applies
the FW algorithm to a randomly smoothed objective, instead
providing probabilistic bounds.  This approach closely addresses the setting that we consider, however, it has a practical bottleneck.
Observe that (at each iteration) one has to make many queries to the zeroth order oracle of the objective function in order to compute the
gradient of the smoothed (approximate) function which is often expensive when the function depends on the number of data points (may be quite large in many problems).
We do observe this behavior in our experimental evaluations in section \ref{sec:exps}. We will point out other related works in the relevant sections below. 
Overall, we see that much of the existing literature has approached this problem either via an implicit \emph{randomized smoothing} \cite{hazan2012-random-smoothing} or 
by using proximal functions \cite{argyriou2014hybrid, pierucci2014smoothing} to yield suitable gradients and provide convergence results 
for the {\em smoothed} objective. 

{\em Summary of our contributions.}  
 In section \ref{sec:prelim}, we discuss why simply using a subgradient is insufficient to prove convergence. Then, we
  introduce the basic concepts that are used in our algorithm and analysis.
  The starting point of our development is a specific algorithm mentioned in
  \cite{white1993-nondifferentiable}, 
  which is of the same general form as
  Algorithm \ref{alg:nonsmooth-stepsizek}
  of our paper in Section \ref{sec:algorithm};  
  \cite{white1993-nondifferentiable} includes a result equivalent to the ``Approximate Weak Duality''
  that we describe later. 
  In section \ref{sec:algorithm}, we derive an \emph{a priori} convergence result for a nonsmooth
generalization of the FW method, which also yields a \emph{deterministic} bound on the approximation error
and the size of the constructed coreset.  We use a much more general construction for the approximate
subdifferential $T(x, \epsilon)$ and show how this can yield novel \emph{coreset} results analogous to those shown
for the FW method in the smooth case in \cite{clarkson2008-frank-wolfe}.
Using these results, we analyze many important problems in statistics and machine learning in section \ref{sec:applications}.
While the algorithm we present may not be a silver bullet for arbitrary
nonsmooth problems (which may have {\em specialized} algorithms), 
in several settings, the results do have practical import, these are discussed in section \ref{sec:exps}. For instance, 
we show an example where our algorithm enables solving very large problem 
instances on a single desktop where fairly recent papers have deployed distributed 
optimization schemes on a cluster. 
On the theoretical side, a useful result is that our scheme can produce coresets with hard approximation
bounds.
\section{Preliminary Concepts}
\label{sec:prelim}
To introduce our algorithm and the corresponding convergence analysis,
we start with some basic definitions first.
Recall that a fundamental tool for optimization of nonsmooth functions is the
\emph{subdifferential}.
Unfortunately, utilizing just the subdifferential, turns out to be insufficient to produce the necessary convergence bounds for our analysis. 
For instance, in a very recent  paper \cite{nesterov2015complexity_cg}, the authors constructively showed that simply replacing the gradient by a subgradient is
incapable of ensuring convergence --- a simple two dimensional example demonstrated
that the FW algorithm does {\em not} converge to the optimal solution for {\em any} number of iterations.
To our knowledge, there are no clear strategies to fix this limitation at this time. 
As mentioned above, one often uses smoothing techniques as a workaround, where an auxiliary function is constructed 
which is optimized instead \cite{nesterov2005smooth, pierucci2014smoothing}. While this approach has had significant practical impact,
it involves the selection of a \emph{proximal} function. 
But there are no general recipes to choose the proximal function for a given model and is often designed based on the specific 
problem at hand. 

We observe that the convergence analysis of FW type methods  rely on the boundedness of an important quantity called the ``Curvature" Constant, 

{\small\begin{equation}
  C_f^{\text{exact}}:=\sup_{\begin{array}{c} x,s \in D \\ \alpha \in [0,1]  \end{array}}\left( \min_{d \in \partial f(x)} \frac{1}{\alpha^2} \left(
    f(y) - f(x) - \langle y - x, d \rangle
  \right) \right)
  \label{eq:cf-def-exact}
\end{equation}}
where  $y=x+\alpha(s-x)$. It relates how well the first-order information from the approximate
subdifferential {\em globally} describes the function (when $f$ is smooth, $d$ is simply the gradient of $f$).
It is this quantity that becomes unbounded even for simple nonsmooth functions making the subsequent analysis  difficult. 
To make this point clear, we now show a simple example of a nonsmooth function for which this quantity is not bounded, though in this `easy' case we can prove convergence with more
direct means.

\begin{exmp}
 Let $f(x) = |x|$ over $D = [-1,1]$.
For any $x \in (0, \frac{1}{2})$, let $s = -1 + x$ and $\alpha = 2x$.
Accordingly,
\begin{equation}
  y = x + \alpha (s - x) = x + 2x (-1 + x - x) = -x
\end{equation}
By definition,
\begin{flalign}
  C_f^\mathrm{exact}
  &\ge \min_{d \in \partial f(x)} \frac{1}{\alpha^2} \left( f(y) - f(x) - \langle y - x, d \rangle \right) \\
  &= \min_{d \in \partial f(x)} \frac{1}{4x^2} \left(f(-x) - f(x) + \langle 2x, d \rangle\right)
  \intertext{Because $f$ is differentiable at $x$, $\partial f(x) = \{1\}$,}
  &= \frac{1}{4x^2} (|-x| - |x| + 2x) = \frac{1}{2x}
\end{flalign}
Hence we have that,
\begin{equation}
  \lim_{x \to 0^+} \frac{1}{2x} = +\infty.
\end{equation}
We cannot obtain a $C_f^\text{exact}$ that can upper bound this quantity for all $x \in (0, \frac{1}{2})$.
This example shows that linear approximations based on ordinary subgradients can be arbitrarily
poor in the neighborhood of nondifferentiable points.
\end{exmp}

{\em Basic idea.} Intuitively, the subdifferential is a  discontinuous multifunction \cite{robinson2012-convex} and provides an incomplete description of the curvature $C_f$ of the 
function in the neighborhood of the nonsmooth points. After making a few technical adjustments, it turns out that we can work around this issue by making use of
\emph{approximate} subdifferentials. We deal with two constructions that yield
approximate subdifferentials.
Using these definitions, we will give the formal statement
of our convergence theorem in the next section.
The first is the \emph{$\epsilon$-subdifferential}
\cite{hiriart-urruty1993-convex},
which is defined at each point $x$ in the domain of a function $f$ as
\begin{equation}
  \partial_\epsilon f(x) := \{
    d \; | \; f(y) \ge f(x) + \langle s, y - x \rangle - \epsilon
    \text{ for all } y
  \}.
\end{equation}
Notice that the exact subdifferential is recovered when $\epsilon$ is zero,
$\partial_0 f(x) = \partial f(x)$.

While the $\epsilon$-subdifferential indeed provides the theoretical
properties for our convergence bounds,
in practical cases, it requires us to work with carefully chosen problem-specific
subsets of $\partial_\epsilon f(x)$.
A successive construction which we present later produces approximate subdifferentials that have
nicer computational properties while preserving the convergence bounds.
In this approximation, we must take the subdifferentials over a neighborhood; this idea is summarized next.

Let $N$ be an arbitrary mapping from $x$ to neighborhoods around $x$
such that $N(x, \epsilon) \to \{x\}$ as $\epsilon \to 0$.
Formally, we assume that $x \in N(x, \epsilon)$
and there exists some constant $R$ such that
$N(x, \epsilon) \subseteq \mathcal{B}(x, R \epsilon)$
for all $\epsilon > 0$.
The notation $\mathcal{B}(x, r)$ is the open ball around $x$ of radius $r$.
Assume w.l.o.g.~that $R = 1$.
Let $T$ be
\begin{equation}
  T(x, \epsilon) := \begin{cases}
    \{ \nabla f(x) \} & \text{if $f$ is differentiable on $N(x, \epsilon)$} \\
    \underset{u \in N(x, \epsilon)}{\bigcup} \partial f(u) & \text{otherwise}
  \end{cases}
  \label{eq:nbrgrad-Tdef}
\end{equation}
Here $T$ provides a set of approximate subgradients of $x$.
Indeed, if $f$ is $L$-Lipschitz, we have
$T\left(x, \frac{\epsilon}{2L} \right) \subseteq \bigcup_{u \in \mathcal{B}\left(x, \frac{\epsilon}{2L} \right)} \partial f(u) \subseteq \partial_{\epsilon} f(x)$
by Theorem 8.4.4 of \cite{robinson2012-convex}. 
It turns out that with these constructions, we will shortly obtain a {\em deterministic} non-smooth FW algorithm with accompanying \emph{convergence} results. 

\section{Convergence Results}
\label{sec:algorithm}
Using the foregoing concepts, we can adapt the procedure
in \cite{white1993-nondifferentiable} with a few technical modifications as
shown in Algorithm \ref{alg:nonsmooth-stepsizek}.
\begin{algorithm}
  \begin{algorithmic}
    \STATE Pick an arbitrary starting point $x_0 \in D$, where $D$ is the compact convex feasible set.
    \FOR{$k = 0, 1, 2, ... $}
    \STATE Let $\alpha_k := \frac{2}{k+2}$ and $\epsilon_k := \sqrt{\alpha_k}$
    \STATE Compute $s \in D$ such that
               {\footnotesize\begin{align}    
      \label{eq:alg-s-selection}
      {\max_{d \in T(x, \epsilon_k)} \langle s - x, d \rangle
        \le \min_{z \in D} \max_{d \in T(x, \epsilon_k)} \langle z - x, d \rangle}
    \end{align}}
    \STATE Update $x_{k+1} := x_k + \alpha (s - x_k)$
    \ENDFOR
  \end{algorithmic}

   \caption{ \label{alg:nonsmooth-stepsizek}
     Conditional Subgradient (based on ideas introduced in \cite{white1993-nondifferentiable})
  }
\end{algorithm}

To simplify the presentation, we will assume that the subproblems \eqref{eq:alg-s-selection} are efficiently solvable both in theory and in practice. 
In general, this is true when $T(x,\epsilon)$ is a polyhedron since 
any point $d\in T(x,\epsilon)$ can be written as a convex combination of extreme points and extreme rays. 
We also show shortly that for various statistical machine learning problems, we can solve \eqref{eq:alg-s-selection} far more efficiently even though $T(x,\epsilon)$ is not 
a polyhedron making the overall algorithm attractive in any case.
With this assumption, we may generalize the curvature constant  which plays a key role in our convergence analysis,
\begin{align}
  C_f(\epsilon) :=
  \hspace{-1.2em}
  \sup_{\substack{ x,s \in D \\ \alpha \in [0,1] \\ y = x + \alpha (s-x) }}
  \hspace{-0.6em}
   \min_{d \in T(x, \epsilon)} \frac{1}{\alpha^2} \left(
    f(y) - f(x) - \langle y - x, d \rangle
  \right)
  \label{eq:cf-def}
\end{align}
Importantly, we use approximate subgradients instead of a gradient
as in \cite{clarkson2008-frank-wolfe},
where the value of $C_f$ depends on $\epsilon$. 
We can choose an $\epsilon$ varying with iterations in such a
way as to guarantee convergence although in 
practical problems, we see that $C_f \in O(1/\epsilon)$.
In any case, note that the complexity of the algorithm does {\em not} depend on the input data, that is, we can choose $\epsilon$ to be very small such that 
$T(x,\epsilon)$ is tractable in practice. In particular, when the decision variables are defined over examples, and the subdifferential can be computed with {\em only} 
the nonzero coordinates of the decision variables, our algorithm ends up being {\em sublinear}. 
We will see this in detail in the later sections for specific problems that are also empirically verified.
Our central convergence theorem, stated in terms of this constant,
appears next.
 
\begin{Theorem}
  Suppose $f$ is $L$-Lipschitz and
  $C_f(\epsilon) \le \frac{D_f}{\epsilon}$ for some constant
  $D_f$ for any $\epsilon \le 1$.
  Then Alg. \ref{alg:nonsmooth-stepsizek}
  generates a sequence of iterates $x_k$,
  such that for the $k^\text{th}$ iteration:
  \begin{align}
    f( x_k ) - f(x_*) \le \frac{2^{5/2} L + 2^{3/2} D_f}{\sqrt{k + 2}}
    \label{eq:approx-bound}
  \end{align}
  where $x_*$ is the optimal solution to the problem in \eqref{eq:primal-problem}.
  \label{thm:convergence}
\end{Theorem}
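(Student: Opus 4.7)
The plan is to reduce Theorem \ref{thm:convergence} to a one-step recurrence on the suboptimality $h_k := f(x_k) - f(x_*)$ of the form
\[
h_{k+1} \le (1 - \alpha_k)\,h_k + \alpha_k^{3/2}(2L + D_f),
\]
and then close that recurrence by induction to recover the stated $1/\sqrt{k+2}$ rate. Three ingredients feed the recurrence. The first is obtained directly from the curvature definition \eqref{eq:cf-def}: applied to $y = x_{k+1} = x_k + \alpha_k (s_k - x_k)$, it guarantees some $d_k \in T(x_k, \epsilon_k)$ with $f(x_{k+1}) \le f(x_k) + \alpha_k \langle s_k - x_k, d_k \rangle + \alpha_k^2 C_f(\epsilon_k)$.

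The second ingredient uses the selection rule \eqref{eq:alg-s-selection}. Because $x_* \in D$, the choice of $s_k$ forces
\[
\max_{d \in T(x_k, \epsilon_k)} \langle s_k - x_k, d \rangle \le \max_{d \in T(x_k, \epsilon_k)} \langle x_* - x_k, d \rangle,
\]
and in particular the inequality specializes to the individual $d_k$ isolated above. The third ingredient is the Lipschitz-based inclusion $T(x_k, \epsilon_k) \subseteq \partial_{2L\epsilon_k} f(x_k)$ recorded just before the theorem; the definition of the $\epsilon$-subdifferential then yields $\langle d, x_* - x_k\rangle \le f(x_*) - f(x_k) + 2L\epsilon_k$ for every $d \in T(x_k, \epsilon_k)$. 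Chaining these three facts, using $\epsilon_k = \sqrt{\alpha_k}$, and applying the hypothesis $C_f(\epsilon_k) \le D_f/\epsilon_k$ (valid since $\epsilon_k \le 1$ for all $k \ge 0$) collapses to the displayed recurrence.

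The last step is to verify the claim $h_k \le (2^{5/2} L + 2^{3/2} D_f)/\sqrt{k+2}$ by induction on $k$. The base case $k=1$ exploits the full first step $\alpha_0 = 1$, which annihilates the $h_0$ term and gives $h_1 \le 2L + D_f$, comfortably below $(2^{5/2} L + 2^{3/2} D_f)/\sqrt{3}$. The inductive step reduces to the elementary inequality $(k+1)^2 (k+3) \le (k+2)^3$, which expands to $k^2 + 5k + 5 \ge 0$. The main obstacle I anticipate is a conceptual one in the first two ingredients: the curvature bound picks out a \emph{particular} $d_k$, while the $s_k$-optimality bound is phrased as a maximum over \emph{all} $d \in T(x_k, \epsilon_k)$; one must be careful to observe that this maximum necessarily upper-bounds the particular $d_k$, so the two steps chain cleanly. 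A secondary care-point is that the approximate-subdifferential inclusion is stated as $T(x, \epsilon/(2L)) \subseteq \partial_\epsilon f(x)$, which is why the inner-product bound picks up the additive $2L\epsilon_k$ term that ultimately feeds the $L$-contribution to the final constant.
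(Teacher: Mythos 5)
Your proposal is correct and follows essentially the same route as the paper: the identical recurrence $h_{k+1} \le (1-\alpha_k)h_k + \alpha_k^{3/2}(2L+D_f)$ obtained from the curvature bound, the selection rule, and the inclusion $T(x,\epsilon_k)\subseteq\partial_{2L\epsilon_k}f(x)$, followed by the same induction (your inequality $(k+1)^2(k+3)\le(k+2)^3$ is exactly the paper's $\tfrac{k+1}{k+2}\le\tfrac{k+2}{k+3}$ step in disguise). The only cosmetic difference is that you plug $z=x_*$ directly into the $\epsilon$-subdifferential inequality instead of routing through the paper's explicit dual $\omega$ and primal--dual gap $g$ with the Approximate Weak Duality lemma, which amounts to the same estimate.
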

{\em Remarks.} As the iteration count $k$ increases, the denominator in this upper bound
increases.  Since the numerator consists of constants depending
only on the definition of $f$ and $D$, this will approach $0$ as $k \to \infty$. It is important to notice 
that while this theorem provides a general convergence
bound (which is useful), it does {\em not} alone prove that Algorithm \ref{alg:nonsmooth-stepsizek}
will produce a \emph{coreset} and consequently, will be sublinear. Indeed, it is a known result for {\em smooth functions}; 
shortly, in example cases covered in Section \ref{sec:applications},
we show {\em the other key component} of the proposed algorithm:
a bound on the number of nonzeros for any solution $s$ of the subproblems
\eqref{eq:alg-s-selection}.
The key results here will depend on $D$ and $f$,
and we will seek to show that the number of nonzero entries in $s$ will
be $O(1)$ w.r.t.~the overall size of the problem.
This in turn guarantees that $x_k$ will have no more than $O(k)$ nonzeros.
Combined with the convergence bound above, this in turn will demonstrate that a sparse
$\epsilon$-approximate solution can be found with $O(1/\epsilon^2)$
nonzeros which does {\em not} depend on the problem size (e.g., number of samples/examples or dimensions).
When $x$ is a vector of the \emph{examples} in a machine learning problem,
the union of nonzeros for each $s$ found by Algorithm \ref{alg:nonsmooth-stepsizek}
will constitute a \emph{coreset}.
Theorem \eqref{thm:convergence} can be used as a \emph{general framework} that can be extended to show a coreset result
for any nonsmooth problem for which a sparsity bound on $s$ can in turn be
shown, yielding a valuable tool for sublinear algorithm design. 

\emph{Ingredients:} The proof of Theorem \ref{thm:convergence} relies on a bound in the improvement in the
objective at each iteration in terms of $C_f$ and the duality gap
between the primal objective at $x$ and a nonsmooth modification of the
Wolfe dual.
We define this dual $\omega$ of $f$ as:
\begin{align}
  \omega(x, \epsilon) :=
  \min_{z \in D} \max_{d \in \partial_\epsilon f(x)} f(x) + \langle z - x, d \rangle.
  \label{eq:eps-subdiff-dual}
\end{align}
This dual gives us a property we call \emph{approximate weak duality}.
Up to $\epsilon$, this dual is at all points less than the minimum value
of the primal objective at all points. 
\begin{Lemma}[Approx. Weak Duality]
  $\omega(x, \epsilon) \le f(y) + \epsilon$
  for all $x,y \in D$,
  \label{lem:eps-subdiff-duality}
\begin{proof}
    Take any $x,y \in D$.
  Due to the minimization in \eqref{eq:eps-subdiff-dual},
  \begin{flalign}
    \omega(x, \epsilon)
    &\le \max_{d \in \partial_\epsilon f(x)} f(x) + \langle y - x, d \rangle
  \end{flalign}
  By the definition of the $\epsilon$-subdifferential,
  for any $d \in \partial_\epsilon f(x)$,
  including the one chosen by the maximization,
  \begin{align}
    f(x) + \langle y - x, d \rangle \le f(y) + \epsilon
  \end{align}
  Yielding the lemma statement.

\end{proof}
\end{Lemma}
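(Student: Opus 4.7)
The plan is to peel the two operators in the definition of $\omega(x,\epsilon)$ apart, using the outer $\min$ to specialize to $z = y$ and then using the defining property of the $\epsilon$-subdifferential to control the inner $\max$. This is a ``duality for free'' argument that should require nothing beyond unpacking the definitions; the $\epsilon$-subdifferential is constructed precisely so that this inequality holds.

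More concretely, first I would fix arbitrary $x, y \in D$ and note that because $\omega(x,\epsilon)$ is a minimum over $z \in D$, in particular plugging in the specific choice $z = y \in D$ gives
\begin{equation*}
  \omega(x,\epsilon) \;\le\; \max_{d \in \partial_\epsilon f(x)} \bigl( f(x) + \langle y - x, d \rangle \bigr).
\end{equation*}
This reduces the two-point statement to a one-sided bound on the inner maximization. Next, I would invoke the defining inequality of $\partial_\epsilon f(x)$: for every $d \in \partial_\epsilon f(x)$ and every point of the domain, $f(y) \ge f(x) + \langle d, y - x \rangle - \epsilon$, which rearranges to $f(x) + \langle y - x, d \rangle \le f(y) + \epsilon$. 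Since this holds \emph{uniformly} in $d$, it also holds at any maximizer, so the max is itself bounded by $f(y) + \epsilon$. Chaining this with the previous display yields the claim.

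I do not anticipate any real obstacle. The only point worth care is getting the quantifier order right: the outer $\min$ helps us (we only need a single feasible $z$ to bound $\omega$ above), whereas the inner $\max$ would ordinarily be a nuisance, except that the $\epsilon$-subgradient inequality is valid for every element of $\partial_\epsilon f(x)$, so the uniformity swallows the $\max$. No Lipschitz, compactness, or continuity hypothesis on $f$ or $D$ is needed; the lemma is purely a consequence of how $\omega$ and $\partial_\epsilon f$ are defined.
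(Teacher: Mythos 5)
Your proposal is correct and follows exactly the same two-step argument as the paper: specialize the outer minimum to $z = y$, then apply the defining inequality of $\partial_\epsilon f(x)$ uniformly over $d$ to absorb the inner maximum. No differences worth noting.
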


Denote the primal-dual gap at $x$ by
\begin{align}
  g(x, \epsilon) := f(x) - \omega(x, \epsilon)
  = \max_{y \in D} \min_{d \in \partial_\epsilon f(x)} \langle x - y, d \rangle
  \label{eq:duality-gap}
\end{align}
By Lemma \ref{lem:eps-subdiff-duality},
\begin{align}
  g(x, \epsilon) \ge f(x) - f(x_*) - \epsilon \ge -\epsilon
\end{align}
The combination of the primal-dual gap and the curvature constant $C_f$
produces a stepwise bound on the objective.
This is then used to show the a priori convergence result in
Theorem \ref{thm:convergence}.
We see that at each step of Algorithm \ref{alg:nonsmooth-stepsizek},
the objective at $x_{k+1}$ improves upon the objective at the previous
iterate $x_k$ by a term proportional to the primal-dual gap,
up to a ``curvature error'' term with $C_f$.
This is stated in the following result:
\begin{Theorem}
  For any step $x_{k+1} := x_k + \alpha (s - x_k)$,
  with arbitrary step size $\alpha \in [0,1]$ and $s \in D$ chosen 
  as in \eqref{eq:alg-s-selection}, it holds that:
  \begin{align}
    f(x_{k+1}) \le f(x_k) - \alpha g(x_k, \epsilon') + \alpha^2 C_f(\epsilon_k),
    \label{eq:stepwise-bound}
  \end{align}
  where $\epsilon' = 2 L \epsilon_k$ when $f$ is $L$-Lipschitz.
  \label{thm:stepwise-bound}
\end{Theorem}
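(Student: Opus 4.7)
The plan is to chain together three inequalities: a linearization bound from $C_f(\epsilon_k)$, the optimality of the subproblem defining $s$, and the set inclusion $T(x,\epsilon_k)\subseteq \partial_{\epsilon'}f(x)$ that is already mentioned just before Algorithm \ref{alg:nonsmooth-stepsizek}. The goal is to get from a statement involving $T$ (which the algorithm actually works with) to a statement involving $\partial_{\epsilon'}f$ (which appears in $g(x,\epsilon')$).

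First I would unpack the definition of $C_f(\epsilon_k)$ applied to the triple $(x_k, s, \alpha)$ with $y = x_{k+1} = x_k + \alpha(s - x_k)$. Since $C_f(\epsilon_k)$ is a supremum of a minimum, there must exist some $d^\star \in T(x_k, \epsilon_k)$ such that
\begin{equation*}
  f(x_{k+1}) - f(x_k) - \langle x_{k+1} - x_k, d^\star\rangle \;\le\; \alpha^2 C_f(\epsilon_k).
\end{equation*}
Substituting $x_{k+1} - x_k = \alpha(s - x_k)$ and rearranging gives $f(x_{k+1}) \le f(x_k) + \alpha \langle s - x_k, d^\star\rangle + \alpha^2 C_f(\epsilon_k)$, so it remains to show that $\langle s - x_k, d^\star\rangle \le -g(x_k, \epsilon')$.

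Next I would use that $d^\star \in T(x_k,\epsilon_k)$, so $\langle s - x_k, d^\star\rangle \le \max_{d \in T(x_k,\epsilon_k)} \langle s - x_k, d\rangle$, and then invoke the algorithm's selection rule \eqref{eq:alg-s-selection} to upper bound this by $\min_{z\in D}\max_{d \in T(x_k,\epsilon_k)} \langle z - x_k, d\rangle$. Now I would apply the inclusion $T(x_k,\epsilon_k)\subseteq \partial_{\epsilon'}f(x_k)$ with $\epsilon' = 2L\epsilon_k$, which holds by the $L$-Lipschitz observation recalled right before the theorem (via Theorem 8.4.4 of \cite{robinson2012-convex}). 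Since the inner max is taken over a larger set on the right-hand side, this yields
\begin{equation*}
  \min_{z\in D}\max_{d\in T(x_k,\epsilon_k)} \langle z - x_k, d\rangle \;\le\; \min_{z\in D}\max_{d\in \partial_{\epsilon'}f(x_k)} \langle z - x_k, d\rangle.
\end{equation*}
Finally, I would match the right-hand side to $-g(x_k,\epsilon')$ by flipping signs inside the min-max, using $\min_d \langle x_k - z, d\rangle = -\max_d \langle z - x_k, d\rangle$; this is exactly the identity $g(x_k,\epsilon') = -\min_{z\in D}\max_{d\in \partial_{\epsilon'}f(x_k)} \langle z - x_k, d\rangle$ obtained by rewriting \eqref{eq:duality-gap}. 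Assembling everything delivers \eqref{eq:stepwise-bound}.

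The main obstacle I anticipate is purely bookkeeping rather than conceptual: one has to be careful that the $d^\star$ coming from the definition of $C_f(\epsilon_k)$ (which minimizes the linearization error) is the \emph{same} $d$ whose inner product with $s - x_k$ we then upper-bound. Because the chain of inequalities only ever relaxes $d^\star$ to a max over $T(x_k,\epsilon_k)$, this is fine; the apparent mismatch between ``min over $d$'' in $C_f$ and ``max over $d$'' in the subproblem dissolves once one realizes that upper bounding $\langle s-x_k, d^\star\rangle$ by the max is sound. Once this is verified and the inclusion $T(x_k,\epsilon_k)\subseteq \partial_{2L\epsilon_k}f(x_k)$ is cited, the rest is algebra.
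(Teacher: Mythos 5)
Your proposal is correct and follows essentially the same chain of inequalities as the paper's own proof: linearization via $C_f(\epsilon_k)$, the selection rule \eqref{eq:alg-s-selection}, the inclusion $T(x_k,\epsilon_k)\subseteq\partial_{2L\epsilon_k}f(x_k)$, and the sign flip to recover $g(x_k,\epsilon')$ from \eqref{eq:duality-gap}. The only difference is that you make explicit the passage from the ``min over $d$'' in $C_f$ to the ``max over $d$'' via an intermediate $d^\star$, a step the paper performs silently in its first display line.
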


\begin{proof}
 Write $x := x_k, y := x_{k+1} = x + \alpha (s - x)$,
  and $\epsilon := \epsilon_k$.
  From the definition of $C_f$,
  \begin{align*}
    f(y) &\le f(x) + \left( \max_{d \in T(x, \epsilon)} \langle y - x, d \rangle \right) + \alpha^2 C_f(\epsilon) \\
    &= f(x) + \alpha \left( \max_{d \in T(x, \epsilon)} \langle s - x, d \rangle \right) + \alpha^2 C_f(\epsilon) \\
    \intertext{By choice of $s$, using \eqref{eq:alg-s-selection},}
    &\le f(x) + \alpha \left( \min_{y \in D} \max_{d \in T(x, \epsilon)} \langle y - x, d \rangle \right) + \alpha^2 C_f(\epsilon)  \\
    \intertext{now using the fact that $T\left(x,\frac{\epsilon}{2L}\right)\subseteq \partial_{\epsilon}f(x)$, } 
    &\le f(x) +  \alpha \left( \min_{y \in D} \max_{d \in \partial_{\epsilon'} f(x)} \langle y - x, d \rangle \right) + \alpha^2 C_f(\epsilon) \\
    &= f(x) - \alpha \left( \max_{y \in D} \min_{d \in \partial_{\epsilon'} f(x)} \langle x - y, d \rangle \right) + \alpha^2 C_f(\epsilon) \\
    \intertext{Substituting \eqref{eq:duality-gap},}
    &= f(x) - \alpha g(x, \epsilon') + \alpha^2 C_f(\epsilon)
  \end{align*}
\end{proof}
\begin{proof}[Proof of Theorem \ref{thm:convergence}]
  Finally, we prove the main theorem by inductively applying this
  stepwise bound to find the \emph{a priori} bound in terms of $k$.

  Let $h(x) = f(x) - f(x_*)$.
  Using Theorem \ref{thm:stepwise-bound}, we first restate the
  stepwise bound as follows:
  \begin{flalign*}
    h(x_{k+1})
    &\le h(x_k) - \alpha_k g(x_k, 2 L \epsilon_k) + \alpha_k^2 C_f(\epsilon_k) \\
    \intertext{By Lemma \ref{lem:eps-subdiff-duality},}
    &\le h(x_k) - \alpha_k (h(x_k) - 2 L \epsilon_k) + \alpha_k^2 C_f(\epsilon_k) \\
    &\le (1 - \alpha_k) h(x_k) + 2L\alpha_k \epsilon_k + \frac{\alpha_k^2}{\epsilon_k} D_f \tag{{\large$\ast$}}
    \label{eq:proof-step-ae-bounds}
    \intertext{Now, if we substitute the constant $E := 2L + D_f$,
      and the choice $\epsilon_k = \sqrt{\alpha_k}$ as in the algorithm,}
    &= (1 - \alpha_k) h(x_k) + \alpha_k^{3/2} E
  \end{flalign*}
  We will  see in the experiments how this choice of $\epsilon_k$ makes the algorithm practically useful. Using this notation to rewrite the theorem, we seek to show
  $h(x_k) \le \frac{2^{3/2} E}{\sqrt{k+2}} = 2 E \sqrt{\alpha_k}$.

  We prove this by induction.
  The base case comes from applying the stepwise bound with $k=0$ and
  $\alpha_k = 1$
  to get $h(x_1) \le E \le \frac{2^{3/2} E}{\sqrt{3}}$.
  For $k \ge 1$, making the induction step,
  \begin{flalign*}
    h(x_{k+1})
    &\le \left(1 - \alpha_k \right) (2 E \sqrt{\alpha_k}) + \alpha_k^{3/2} E \\
    &= \left(1 - \frac{2}{k+2} \right) \frac{2^{3/2} E}{\sqrt{k+2}}
    + \frac{2^{3/2}}{(k+2)^{3/2}} E \\
    &= 2^{3/2} E \sqrt{k+2} \left( \frac{1}{k+2} - \frac{1}{(k+2)^2} \right)
  \end{flalign*}
  Now note that
  \begin{flalign*}
    \left( \frac{1}{k+2} - \frac{1}{(k+2)^2} \right)
    &= \frac{1}{k+2} \frac{k+2-1}{k+2} \\
    &\le \frac{1}{k+2} \frac{k+2}{k+3} = \frac{1}{k+3}
  \end{flalign*}
  So
  \begin{align*}
    h(x_{k+1}) \le 2^{3/2} E \frac{\sqrt{k+2}}{k+3}
    \le 2^{3/2} E \frac{\sqrt{k+3}}{k+3}
    = \frac{2^{3/2} E}{\sqrt{k+3}}
  \end{align*}
\end{proof}
\emph{Comments:} 
A key intuition about the algorithm can be gained from the proof
step in \eqref{eq:proof-step-ae-bounds}.
This inequality implies that the improvement in the objective for each step
has a bounded ``error'' with two parts.
First is an error linear in $\alpha_k$ that comes from our use
of an approximate subgradient rather than an exact subgradient.
This error is proportional to $\epsilon_k$.
Second, we have a ``curvature'' error that arises from the nonlinearity
of our objective function and is instead \emph{inversely} proportional to 
$\epsilon_k$.
By choosing a decaying $\epsilon_k$ such that
$\frac{1}{\epsilon_k} \in \Theta\left(\frac{1}{\sqrt{\alpha_k}}\right)$
we send both error terms to $0$ as $k \to \infty$.
\section{Approximable Nonsmooth Problems}
\label{sec:applications}

In this section, we  demonstrate how the foregoing ideas can be applied and made more specific, on a case by case basis, for various problems arising 
in statistical machine learning. 
Since many of the steps and quantities in the algorithm and theorems described in the previous section 
depend on details of the individual problem,
we use these examples to show the potential value of our results.
In this section, each problem we present yields a choice of objective function $f$ and feasible region $D$.
For these choices, we then describe the approximate subdifferentials
and the resulting subproblems to determine the step direction.
We show that the antecedents of Theorem \ref{thm:convergence} are satisfied for a number of machine learning problems
and provide $O(1/\epsilon)$ bounds for $C_f(\epsilon)$.
A problem-specific bound on $C_f$ is the final component of establishing
an iteration bound for that problem.
It guarantees that our scheme for selecting $\epsilon_k$ will provide
convergence,
and that we have finite values for the numerator in \eqref{eq:approx-bound}. Therefore, 
the quotient will approach $0$ in the limit. 

{\em Overview of problems.} We first analyze the supremum of linear functions and  compute the $C_f$. With this in hand, we will present a formulation of 
$\ell_1$ SVM that is suitable for our analysis. In particular, we show $C_f$ bounds using the result from piecewise linear functions and then 
proceed to discuss how the subproblems \eqref{eq:alg-s-selection} can be efficiently solved. We discuss the tractability and provide tight coreset bounds for this problem. 
Later, we analyze the multiway graph cuts model by adapting the techniques used in the $\ell_1$ SVM problem and finally, we also describe how the analysis 
extends to the 1-median problem, Sparse PCA and Balanced development.
\subsection{Supremum of Linear Functions}
\label{sec:piecewise-linear}

We can define a piecewise linear convex function on $\R^n$ by
\begin{align}
  f(x) := \max_{i=1,...,p} a_i^T x + b_i.
  \label{eq:piecewise-linear-def}
\end{align}
If we let $A \in \R^{p \times n}$ be the matrix such that row $i$
is equal to $a_i$, and $b \in \R^p$ be the vector that has elements $b_i$.
We know that $f$ is Lipschitz-smooth with constant equal to the operator
norm $\|A\|$ and
$f(x) \equiv \hat{f}(Ax + b)$ for

\begin{align}
  \hat{f}(\hat{x}) = \max_{i = 1,...p} \hat{x}_i
\end{align}
so that
\begin{align}
  f(x) := \hat{f}(Ax + b)
  \label{eq:piecewise-fdef}
\end{align}
The subdifferentials of $\hat{f}$ are given by
\begin{align}
  \partial \hat{f}(\hat{x}) = \left\{
  e_i \; \middle| \; \left( \max_{j=1,...,p} \hat{x}_j \right) = \hat{x}_i
    \right\}
  \label{eq:maxelt-subdiff}
\end{align}
For the purposes of calculating approximate subdifferentials 
$\hat{T}$ and $T$ of $\hat{f}$ and $f$ respectively (defined in section \ref{sec:prelim}), we use the neighborhoods:
\begin{align}
    &\hat{N}(\hat{x}, \epsilon) =
    \{ \hat{u} \; | \; \|\hat{u} - \hat{x}\|_\infty \le \epsilon \},
    \\
    &N(x, \epsilon) = \{u \; | \; \|Au - Ax\|_\infty \le \epsilon \text{ and } u - x \in \mathrm{im}\left(A^T\right) \},
\end{align}
where $A^T$ is the transpose of $A$ and $\mathrm{im}\left(A^T\right)$ is the row space of $A$.

\begin{Theorem}  Define
\begin{align}
  V(\hat{x}, \epsilon) = \left\{e_i \; \middle| \;
  \hat{x}_i \ge \left( \max_j \hat{x}_j \right) - 2 \epsilon
  \right\}
\end{align}
and let $\mathrm{Co}S$ denote the convex hull of the set $S$. Then, the approximate subdifferential is given by
  $\hat{T}(\hat{x}, \epsilon) = \mathrm{Co} V(\hat{x}, \epsilon)$
  \label{thm:piecewise-linear-Tverts}
\end{Theorem}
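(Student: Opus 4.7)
The plan is to establish the two inclusions $\hat{T}(\hat{x}, \epsilon) \subseteq \mathrm{Co}\, V(\hat{x}, \epsilon)$ and $\mathrm{Co}\, V(\hat{x}, \epsilon) \subseteq \hat{T}(\hat{x}, \epsilon)$ using the explicit form of the subdifferential in \eqref{eq:maxelt-subdiff} together with the $\ell_\infty$ neighborhood $\hat{N}(\hat{x},\epsilon)$. Let $M := \max_j \hat{x}_j$. The role of the factor $2\epsilon$ in the definition of $V$ should correspond to the fact that coordinates can move both up (at $\hat{x}$) and down (at the current $\hat{u}$) by $\epsilon$ each, giving a total slack of $2\epsilon$.

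For the easy inclusion, I would take any $\hat{u}\in \hat{N}(\hat{x},\epsilon)$ and any $e_i\in\partial\hat{f}(\hat{u})$. By \eqref{eq:maxelt-subdiff}, $\hat{u}_i = \max_j \hat{u}_j$. Since $\max_j \hat{u}_j \ge M - \epsilon$ (perturbing the argmax of $\hat{x}$ down by at most $\epsilon$) and $\hat{u}_i \le \hat{x}_i + \epsilon$, we get $\hat{x}_i \ge M - 2\epsilon$, i.e., $e_i \in V(\hat{x},\epsilon)$. Since $\partial\hat{f}(\hat{u})$ is the convex hull of such $e_i$, it is contained in $\mathrm{Co}\, V(\hat{x},\epsilon)$, and taking the union over $\hat{u}$ preserves this containment.

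The reverse inclusion is the main obstacle, because $\hat{T}$ is defined as a union of convex sets and need not a priori be convex, whereas $\mathrm{Co}\, V$ is. The plan is to show that for any finite subset of vertices $\{e_{i_1},\ldots,e_{i_m}\} \subseteq V(\hat{x},\epsilon)$ there is a single $\hat{u} \in \hat{N}(\hat{x},\epsilon)$ with $\{i_1,\ldots,i_m\} \subseteq \arg\max_j \hat{u}_j$, so that $\mathrm{Co}\{e_{i_1},\ldots,e_{i_m}\} \subseteq \partial\hat{f}(\hat{u}) \subseteq \hat{T}(\hat{x},\epsilon)$. The explicit construction I have in mind is
\[
  \hat{u}_{i_\ell} := \min_\ell \hat{x}_{i_\ell} + \epsilon, \qquad \hat{u}_k := \hat{x}_k - \epsilon \ \text{for } k\notin\{i_1,\ldots,i_m\}.
\]
Then $|\hat{u}_{i_\ell} - \hat{x}_{i_\ell}| \le \epsilon$ follows from $M - 2\epsilon \le \min_\ell \hat{x}_{i_\ell} \le \hat{x}_{i_\ell} \le M$, which gives the $\ell_\infty$ feasibility, and the inequality $\hat{u}_k \le \hat{u}_{i_\ell}$ follows from $\hat{x}_k \le M \le \min_\ell \hat{x}_{i_\ell} + 2\epsilon$. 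So every $i_\ell$ is in the argmax of $\hat{u}$, as desired. Taking the subset to be all of $V$ yields $\mathrm{Co}\, V(\hat{x},\epsilon) \subseteq \hat{T}(\hat{x},\epsilon)$, completing the proof.

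Combining the two inclusions gives the equality $\hat{T}(\hat{x},\epsilon) = \mathrm{Co}\, V(\hat{x},\epsilon)$. A useful byproduct of the argument is that the extreme points of $\hat{T}(\hat{x},\epsilon)$ are exactly $V(\hat{x},\epsilon)$, which will presumably be convenient downstream because solving the inner maximization in Algorithm \ref{alg:nonsmooth-stepsizek} reduces to enumerating only the active coordinates $\{i : \hat{x}_i \ge M - 2\epsilon\}$, independent of $p$ when only a few coordinates of $\hat{x}$ are near the maximum.
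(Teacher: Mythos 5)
Your proposal is correct and follows essentially the same route as the paper: the forward inclusion uses the same $\epsilon+\epsilon=2\epsilon$ slack argument on the vertices of $\partial\hat{f}(\hat{u})$, and the reverse inclusion is established, as in the paper, by exhibiting a single $\hat{u}\in\hat{N}(\hat{x},\epsilon)$ at which all coordinates indexed by $V(\hat{x},\epsilon)$ tie for the maximum, so that $\mathrm{Co}\,V(\hat{x},\epsilon)\subseteq\partial\hat{f}(\hat{u})$. The only cosmetic difference is your choice of witness (levelling the near-max coordinates at $\min_\ell\hat{x}_{i_\ell}+\epsilon$ and lowering the rest by $\epsilon$, versus the paper's levelling at $\max_j\hat{x}_j-\epsilon$ and leaving the rest fixed), and both verifications go through.
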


\begin{proof}
  Note that both sides will be a singleton set equal to
  $\{\nabla \hat{f}(\hat{x})\}$
  iff $\hat{f}$ is differentiable on $\hat{N}(\hat{x},\epsilon)$.
  We therefore prove only the case that $\hat{f}$ is nondifferentiable
  somewhere in the neighborhood.\\

  \noindent \textbf{Forward Direction: $\hat{T}(\hat{x}, \epsilon) \subseteq \mathrm{Co} V(\hat{x}, \epsilon)$}

  Take any $\hat{u} \in \hat{N}(\hat{x}, \epsilon)$. We have, 
  \begin{align}
    \partial \hat{f}(\hat{u})
    = \mathrm{Co}\left\{ e_i \; \middle| \; \hat{u}_i = \left( \max_j \hat{u}_j \right) \right\}
    \label{eq:maxelt-u-subdiff}
  \end{align}
  Now take any $i$ such that $e_i \in \mathrm{vert}(\partial \hat{f}(\hat{u}))$ where 
  $\mathrm{vert}(S)$ denotes the vertices of a polyhedral set $S$.
  If $S$ is a finite set of linearly independent points,
  then $\mathrm{vert}(\mathrm{Co}S) = S$ 
  (which in this case is the set of basis vectors in \eqref{eq:maxelt-u-subdiff}).
  Because $\|\hat{u} - \hat{x}\|_{\infty} \le \epsilon$,
  \begin{align}
    \hat{x}_i \ge \hat{u}_i - \epsilon = 
    \left( \max_j \hat{u}_j \right) - \epsilon
    \ge \left( \max_j \hat{x}_j \right) - 2\epsilon
  \end{align}
  Therefore
  \begin{align}
    \mathrm{vert}(\partial \hat{f}(\hat{u})) &\subseteq V(\hat{x}, \epsilon) \\
    \implies\partial \hat{f}(\hat{u}) &\subseteq \mathrm{Co} V(\hat{x}, \epsilon)
  \end{align}
  Since this is true for all $u \in \hat{N}(\hat{x}, \epsilon)$, we see that
  $\hat{T}(\hat{x}, \epsilon) \subseteq \mathrm{Co} V(\hat{x}, \epsilon)$.\\

  \noindent \textbf{Reverse Direction: $\hat{T}(\hat{x}, \epsilon) \supseteq \mathrm{Co} V(\hat{x}, \epsilon)$}

  Let $\hat{u}$ be the vector defined elementwise by
  \begin{align}
    \hat{u}_i = \begin{cases}
      \left( \max_j \hat{x}_j \right) - \epsilon
      & \text{if } \hat{x}_i \ge \left( \max_j \hat{x}_j \right) - 2 \epsilon \\
      \hat{x}_i & \text{otherwise}
    \end{cases}
  \end{align}

  If we take some $i$ such that $\hat{x}_i \ge \left( \max_j \hat{x}_j \right) - 2 \epsilon$,
  then
  \begin{align}
    \hat{u}_i - \hat{x}_i &= \left( \max_j \hat{x}_j \right) - \epsilon - \hat{x}_i \\
    &\le \left( \max_j \hat{x}_j \right) - \epsilon - \left( \left( \max_j \hat{x}_j \right) - 2 \epsilon \right)  = \epsilon     
      \end{align}
      Similarly, it also holds that,
        \begin{align}
\hat{x}_i - \hat{u}_i\le \left( \max_j \hat{x}_j \right) - \hat{u}_i
    = \left( \max_j \hat{x}_j \right) - \left( \left( \max_j \hat{x}_j \right) - \epsilon \right) = \epsilon
  \end{align}
  And $\hat{u}_i = \hat{x}_i$ for the ``otherwise'' case.
  Therefore $\|\hat{u}_i - \hat{x}_i\|_\infty \le \epsilon$
  and $\hat{u}_i \in \hat{N}(\hat{x}, \epsilon)$.

  Furthermore, note that clearly $\hat{f}(\hat{u}) = \max_j \hat{x}_j - \epsilon$,
  and for any $i$ such that
  \begin{align}
    \hat{x}_i < \left( \max_j \hat{x}_j \right) - 2 \epsilon
  \end{align}
 we have that, for all $\epsilon > 0$,
  \begin{align}
    \hat{x}_i < \left( \max_j \hat{x}_j \right) - \epsilon = \hat{f}(\hat{u})
  \end{align}
  Accordingly,
  \begin{align}
    &\mathrm{vert}(\partial \hat{f}(\hat{u})) = \hat{V}(\hat{x}, \epsilon) \\
\implies    &\hat{T}(\hat{x}, \epsilon) \supseteq \partial \hat{f}(\hat{u}) = \mathrm{Co} \hat{V}(\hat{x}, \epsilon).
  \end{align}
\end{proof}

\begin{Lemma}
  $T(x, \epsilon) = A^T \hat{T}(Ax + b, \epsilon)$
  \label{lem:piecewise-linear-affineT}
\end{Lemma}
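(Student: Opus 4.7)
The plan is to reduce the identity to the subdifferential chain rule for a convex function composed with an affine map. Since $\hat{f}$ is convex and continuous and $u \mapsto Au + b$ is affine, one has $\partial f(u) = A^T \partial \hat{f}(Au + b)$ at every $u \in \R^n$, so
\begin{equation*}
  T(x, \epsilon) = \bigcup_{u \in N(x,\epsilon)} \partial f(u) = A^T \bigcup_{u \in N(x,\epsilon)} \partial \hat{f}(Au + b).
\end{equation*}
This differs from $A^T \hat{T}(Ax+b, \epsilon) = A^T \bigcup_{\hat{v} \in \hat{N}(Ax+b, \epsilon)} \partial \hat{f}(\hat{v})$ only in the range of $\hat{v}$: the first union is restricted to $\hat{v} \in \hat{N}(Ax+b, \epsilon) \cap (Ax + b + \mathrm{im}(A))$, while the second uses all of $\hat{N}(Ax+b, \epsilon)$. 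The differentiable case collapses both sides to $\{\nabla f(x)\}$ via the same chain rule, so the substance lies in the nondifferentiable case.

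For the forward inclusion, $u \in N(x, \epsilon)$ gives $\|Au - Ax\|_\infty \le \epsilon$ by definition, hence $Au + b \in \hat{N}(Ax+b, \epsilon)$ and $\partial \hat{f}(Au + b)$ is among the sets in the union defining $\hat{T}(Ax + b, \epsilon)$. Applying $A^T$ and taking a union yields $T(x,\epsilon) \subseteq A^T \hat{T}(Ax+b, \epsilon)$. For the reverse inclusion, I would invoke Theorem \ref{thm:piecewise-linear-Tverts} for $\hat{f}$ to rewrite $\hat{T}(Ax+b, \epsilon) = \mathrm{Co}\{e_i : i \in V^*\}$ with $V^* = \{i : (Ax+b)_i \ge \max_j (Ax+b)_j - 2\epsilon\}$, so that $A^T \hat{T}(Ax+b, \epsilon) = \mathrm{Co}\{a_i : i \in V^*\}$. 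The cleanest finish would be to construct a single $u_* \in N(x, \epsilon)$ at which every index $i \in V^*$ is simultaneously an argmax of $Au_* + b$: then $\partial f(u_*) = A^T \mathrm{Co}\{e_i : i \in V^*\}$ is exactly the target. Mimicking the reverse direction of Theorem \ref{thm:piecewise-linear-Tverts}, the natural choice is the perturbation $w$ with $w_i = \max_k (Ax+b)_k - \epsilon - (Ax+b)_i$ for $i \in V^*$ and $w_i = 0$ otherwise, which satisfies $\|w\|_\infty \le \epsilon$; taking $u_* - x$ to be the unique preimage of $w$ in $\mathrm{im}(A^T)$ then places $u_*$ inside $N(x, \epsilon)$, provided $w \in \mathrm{im}(A)$.

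The main obstacle will be verifying that this levelling perturbation $w$ actually lies in $\mathrm{im}(A)$, since its construction is not intrinsic to the row space of $A$. Should this fail, my fallback is a per-vertex strategy: for each $i \in V^*$ separately, build a $u_i \in N(x, \epsilon)$ whose perturbation in $\mathrm{im}(A)$ pushes coordinate $i$ to the top of $Au_i + b$, producing $a_i \in \partial f(u_i) \subseteq T(x,\epsilon)$. The remaining job would be to argue that $T(x,\epsilon)$ is convex so that it contains the full hull $\mathrm{Co}\{a_i : i \in V^*\}$; this can be done by interpolating $u_i$'s affinely inside the convex set $N(x, \epsilon)$ and exploiting the piecewise-linear structure of $f$, essentially running the vertex-enumeration argument of Theorem \ref{thm:piecewise-linear-Tverts} inside the subspace $\mathrm{im}(A^T)$.
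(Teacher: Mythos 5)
Your forward inclusion is exactly the paper's argument: the affine chain rule $\partial f(u) = A^T \partial \hat{f}(Au+b)$ plus the observation that $u \in N(x,\epsilon)$ implies $Au+b \in \hat{N}(Ax+b,\epsilon)$. Where you stall --- the reverse inclusion --- is precisely where the paper's two-line proof is also silently incomplete: the paper simply asserts $A\,N(x,\epsilon)+b = \hat{N}(Ax+b,\epsilon)$, whereas what is actually true is $A\,N(x,\epsilon)+b = \hat{N}(Ax+b,\epsilon) \cap \bigl(Ax+b+\mathrm{im}(A)\bigr)$, which you correctly identified. The asserted equality holds only when $\mathrm{im}(A) = \R^p$, i.e.\ $A$ has full row rank, and under that hypothesis your levelling vector $w$ is automatically in $\mathrm{im}(A)$ and your construction closes the argument (it is then essentially the reverse direction of Theorem \ref{thm:piecewise-linear-Tverts} transported through $A$).

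The obstacle you flag is not removable in general, so neither your fallback nor any other argument can rescue the unconditional statement: take $n=1$, $p=2$, $A = (1,0)^T$, $b = (0,\delta)$ with $\epsilon < \delta < 2\epsilon$, and $x=0$. Then $f(u) = \max(u,\delta)$ is constant, hence differentiable, on $N(0,\epsilon) = [-\epsilon,\epsilon]$, so $T(0,\epsilon) = \{0\}$ by \eqref{eq:nbrgrad-Tdef}; but $\hat{x} = (0,\delta)$ has $\hat{x}_1 = 0 \ge \delta - 2\epsilon$, the box $\hat{N}(\hat{x},\epsilon)$ meets the diagonal where $\hat{f}$ is nonsmooth, and so $\hat{T}(\hat{x},\epsilon) = \mathrm{Co}\{e_1,e_2\}$ and $A^T\hat{T}(\hat{x},\epsilon) = [0,1] \ne \{0\}$. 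The failure is exactly that the nonsmooth points of $\hat{f}$ inside $\hat{N}$ are not reachable from the affine slice $Ax+b+\mathrm{im}(A)$. So the correct conclusion of your analysis is not a completed proof but a needed hypothesis: either assume $A$ surjective (so $w \in \mathrm{im}(A)$ trivially), or weaken the lemma to $T(x,\epsilon) = A^T\bigl(\bigcup_{\hat{v}} \partial\hat{f}(\hat{v})\bigr)$ with the union over $\hat{v} \in \hat{N}(Ax+b,\epsilon)\cap(Ax+b+\mathrm{im}(A))$, and rerun the vertex argument of Theorem \ref{thm:piecewise-linear-Tverts} inside that slice. Your per-vertex fallback also inherits the same defect (in the example above, no $u$ near $0$ ever makes coordinate $2$ active), and additionally would still owe an argument that the union $\bigcup_{u}\partial f(u)$ is convex, which your single-point levelling construction was designed to avoid.
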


\begin{proof}
  This follows from the fact that
  $\partial f(u) = A^T \partial \hat{f}(Au + b)$,
  and $A~N(x, \epsilon) + b = \hat{N}(Ax + b, \epsilon)$.
\end{proof}

\begin{Corollary}
  $T(x, \epsilon) = A^T \mathrm{Co} V(Ax + b, \epsilon) = \mathrm{Co} A^T V(Ax + b, \epsilon)$
\end{Corollary}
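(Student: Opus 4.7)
The proof is essentially a one-line composition of the two preceding results together with a standard fact about how linear maps interact with convex hulls. My plan is as follows.

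First, I would apply Lemma \ref{lem:piecewise-linear-affineT} to rewrite $T(x,\epsilon)$ as $A^T \hat{T}(Ax + b, \epsilon)$. Then I would substitute the characterization given by Theorem \ref{thm:piecewise-linear-Tverts}, namely $\hat{T}(\hat{x}, \epsilon) = \mathrm{Co}\, V(\hat{x}, \epsilon)$ with $\hat{x} := Ax + b$, to obtain the first equality $T(x, \epsilon) = A^T \mathrm{Co}\, V(Ax + b, \epsilon)$.

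For the second equality, I would invoke the general fact that for any linear map $L$ and any set $S \subseteq \mathbb{R}^n$, one has $L(\mathrm{Co}\, S) = \mathrm{Co}\, L(S)$. The forward inclusion comes from writing any $v = \sum_i \lambda_i s_i$ with $s_i \in S$, $\lambda_i \ge 0$, $\sum_i \lambda_i = 1$, and applying linearity so that $L v = \sum_i \lambda_i L s_i \in \mathrm{Co}\, L(S)$. The reverse inclusion comes from starting with $\sum_i \lambda_i (L s_i) \in \mathrm{Co}\, L(S)$ and rewriting it as $L(\sum_i \lambda_i s_i)$ with the sum lying in $\mathrm{Co}\, S$. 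Applied with $L = A^T$ and $S = V(Ax + b, \epsilon)$, this yields $A^T \mathrm{Co}\, V(Ax + b, \epsilon) = \mathrm{Co}\, A^T V(Ax + b, \epsilon)$, completing the chain.

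There is no real obstacle here: both building blocks have been established, and the only extra ingredient is the commutation of linear images with convex hulls, which is standard. If one wanted a cleaner presentation, the whole argument can be compressed into a single display equation chaining the Lemma, the Theorem, and the linearity/convex hull identity.
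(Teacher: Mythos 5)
Your proposal is correct and follows exactly the paper's route: the first equality by combining Lemma \ref{lem:piecewise-linear-affineT} with Theorem \ref{thm:piecewise-linear-Tverts}, and the second by the commutation of linear maps with convex hulls, which the paper dispatches as ``clear from the definition of convex hull.'' You simply spell out that last step in more detail, which is fine.
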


\begin{proof}
  The first equality is simply the combination of Lemma \ref{lem:piecewise-linear-affineT} and Theorem \ref{thm:piecewise-linear-Tverts}.
  The second is clear from the definition of convex hull.
\end{proof}
So the approximate subdifferentials $\hat{T}$ and $T$,
 of the functions $\hat{f}$ and $f$ respectively are related by
\begin{align}
    &\hat{T}(\hat{x}, \epsilon) = \mathrm{Co} V(\hat{x}, \epsilon) \\
    \text{ and } 
    \quad &T(x, \epsilon) = A^T \mathrm{Co} V(Ax + b, \epsilon) 
  \label{eq:piecewise-T-from-vertices}
\end{align}
for an appropriate choice of neighborhoods.
\begin{Lemma}
  If $\hat{y} \in \hat{N}(\hat{x}, \epsilon)$
  then $\partial \hat{f}(\hat{y}) \subseteq \hat{T}(\hat{x}, \epsilon)$.
  \label{lem:piecewise-linear-ysubgrad}
\end{Lemma}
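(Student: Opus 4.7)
The plan is to mirror the forward-direction argument from Theorem \ref{thm:piecewise-linear-Tverts}, which showed $\hat{T}(\hat{x},\epsilon) \subseteq \mathrm{Co}\, V(\hat{x},\epsilon)$ by pushing a subgradient taken at a neighborhood point back into $V(\hat{x},\epsilon)$. The current lemma is essentially a single-point version of that same inclusion: rather than a union over the whole neighborhood, I only need to control one particular neighbor $\hat{y}$. So the proof reduces to showing that each vertex of $\partial\hat{f}(\hat{y})$ is a vertex of $\mathrm{Co}\,V(\hat{x},\epsilon)$.

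First I would recall the explicit form of the subdifferential of the max function, namely
\[
  \partial\hat{f}(\hat{y}) = \mathrm{Co}\left\{e_i \;\middle|\; \hat{y}_i = \max_j \hat{y}_j\right\}.
\]
Since the right-hand side is the convex hull of a set of standard basis vectors (which are affinely independent), it suffices to prove $e_i \in V(\hat{x},\epsilon)$ for every $i$ attaining $\hat{y}_i = \max_j \hat{y}_j$, and then take convex hulls.

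For such an index $i$, the definition $\hat{y} \in \hat{N}(\hat{x},\epsilon)$ gives $\|\hat{y}-\hat{x}\|_\infty \le \epsilon$, so a two-step chain does the job:
\[
  \hat{x}_i \;\ge\; \hat{y}_i - \epsilon \;=\; \left(\max_j \hat{y}_j\right) - \epsilon \;\ge\; \left(\max_j \hat{x}_j\right) - 2\epsilon,
\]
where the last inequality uses that $\max_j \hat{y}_j \ge \max_j \hat{x}_j - \epsilon$, again by the $\ell_\infty$ bound on $\hat{y}-\hat{x}$. This is exactly the defining condition of $V(\hat{x},\epsilon)$, so $e_i \in V(\hat{x},\epsilon)$; taking convex hulls yields $\partial\hat{f}(\hat{y}) \subseteq \mathrm{Co}\,V(\hat{x},\epsilon) = \hat{T}(\hat{x},\epsilon)$.

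The only subtlety is the case where $\hat{f}$ is differentiable on all of $\hat{N}(\hat{x},\epsilon)$, because then the definition \eqref{eq:nbrgrad-Tdef} makes $\hat{T}(\hat{x},\epsilon) = \{\nabla\hat{f}(\hat{x})\}$ rather than the union over the neighborhood. But in that case the maximizer coordinate is constant across the neighborhood, so $\partial\hat{f}(\hat{y}) = \{e_i\} = \{\nabla\hat{f}(\hat{x})\}$ for the unique maximizer $i$, and the inclusion is trivial. I do not anticipate any real obstacle — the whole argument is a one-line $\ell_\infty$ triangle inequality — so the proof should be just a few lines.
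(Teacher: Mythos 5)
Your proof is correct, but it takes a longer route than necessary in the main case. When $\hat{f}$ is nondifferentiable somewhere on $\hat{N}(\hat{x},\epsilon)$, the definition \eqref{eq:nbrgrad-Tdef} gives $\hat{T}(\hat{x},\epsilon) = \bigcup_{u \in \hat{N}(\hat{x},\epsilon)} \partial \hat{f}(u)$, and since $\hat{y}$ is itself one of the points $u$ in that union, the inclusion $\partial\hat{f}(\hat{y}) \subseteq \hat{T}(\hat{x},\epsilon)$ is immediate --- this is all the paper says for that case. You instead route the argument through the characterization $\hat{T}(\hat{x},\epsilon) = \mathrm{Co}\,V(\hat{x},\epsilon)$ of Theorem \ref{thm:piecewise-linear-Tverts} and re-run the $\ell_\infty$ chain $\hat{x}_i \ge \hat{y}_i - \epsilon \ge \max_j \hat{x}_j - 2\epsilon$, which is essentially the forward direction of that theorem specialized to the single point $\hat{y}$. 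This is valid (the chain is right, and taking convex hulls of the basis vectors is fine), and it has the mild virtue of exhibiting concretely \emph{which} vertices of $V(\hat{x},\epsilon)$ contain $\partial\hat{f}(\hat{y})$; but it makes the lemma depend on Theorem \ref{thm:piecewise-linear-Tverts} when it needs only the raw definition of $\hat{T}$. Your handling of the everywhere-differentiable case matches the paper's: both of you assert that the unique maximizing coordinate is constant across the neighborhood (which rests on an implicit connectedness/openness argument that neither you nor the paper spells out), so that $\nabla\hat{f}(\hat{y}) = \nabla\hat{f}(\hat{x})$ and the inclusion holds with equality.
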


\begin{proof}
  If $\hat{f}$ is nondifferentiable anywhere on $\hat{N}(\hat{x}, \epsilon)$,
  the inclusion follows from the definition of $\hat{T}$. So we will consider the case when the neighborhood contains no nondifferentiable points.

  Let $i' := \arg \max_i \hat{x}_i$.
  If $\hat{f}$ is instead differentiable everywhere on the neighborhood
  around $\hat{x}$,
  this means $\hat{u}_i < \hat{u}_{i'}$
  for all $u \in \hat{N}(\hat{x}, \epsilon)$ for any $i \neq i'$.
  Therefore $\grad \hat{f}(\hat{x}) = \grad \hat{f}(\hat{y}) = \{e_{i'}\}$
  and the lemma is true as an equality.
\end{proof}
\begin{Theorem}
  Consider any $\hat{x}, \hat{s} \in \R^p$ and $\alpha \in [0,1]$.
  Let $\hat{y} := \hat{x} + \alpha (\hat{s} - \hat{x})$.
  Then
  \begin{align}
    \min_{\hat{d} \in \hat{T}(x,\epsilon)}
    \frac{1}{\alpha^2} \left(
    \hat{f}(\hat{y}) - \hat{f}(\hat{x})
    - \langle \hat{y} - \hat{x}, \hat{d} \rangle \right)
    \le \frac{2}{\epsilon} \| \hat{s} - \hat{x} \|^2_\infty.
    \label{eq:maxelt-Cf-distance}
  \end{align}
  \begin{proof}
    It is sufficient to look at the following two cases, \\
    
    \noindent \textbf{Case 1: $\hat{y} \in \hat{N}(\hat{x}, \epsilon)$}
  
  Let $i' = \arg \max_i \hat{y}_i$.
  Then $e_{i'} \in \partial \hat{f}(\hat{y}) \subseteq \hat{T}(x,\epsilon)$
  from \eqref{eq:maxelt-subdiff} and Lemma \ref{lem:piecewise-linear-ysubgrad}.
  Then the left hand side of \eqref{eq:maxelt-Cf-distance}
  is bounded above by
  \begin{align}
    \hat{f}(\hat{y}) - \hat{f}(\hat{x}) - \langle \hat{y} - \hat{x}, e_{i'} \rangle
    &= \hat{y}_{i'} - \left(\max_i \hat{x}_i\right) - (\hat{y}_{i'} - \hat{x}_{i'})\\
    &= \hat{x}_{i'} - \left(\max_i \hat{x}_i\right) \le 0
  \end{align}

  \noindent \textbf{Case 2: $\hat{y} \notin \hat{N}(\hat{x}, \epsilon)$}

  First note that in this case we have a lower bound on $\alpha$:
  \begin{align}
    \alpha \ge \frac{\epsilon}{\| \hat{s} - \hat{x} \|_\infty}
    \label{eq:piecewise-linear-alpha-lower-bound}
  \end{align}
  Then we use the fact that $\|\hat{d}\|_1 \le 1$ for any
  $\hat{d} \in \partial \hat{f}(\hat{u})$ for any $\hat{u} \in \R^p$
  to bound the left hand side of \eqref{eq:maxelt-Cf-distance} by
  \begin{align}
     &\hat{f}(\hat{y}) - \hat{f}(\hat{x}) + \|\hat{y} - \hat{x}\|_\infty
    \\&= \left( \max_i \; \hat{x}_i + \alpha (\hat{s}_i - \hat{x}_i) \right)
      - \left( \max_i \hat{x}_i \right)
    + \alpha \|\hat{s} - \hat{x}\|_\infty \\
    &\le \left( \max_i \hat{x}_i \right)
    + \alpha \left( \max_i \hat{s}_i - \hat{x}_i \right)
      - \left( \max_i \hat{x}_i \right)
    + \alpha \|\hat{s} - \hat{x}\|_\infty \\
    &\le 2 \alpha \|\hat{s} - \hat{x}\|_\infty
    \label{eq:piecewise-linear-cflem-last}
  \end{align}
  The bound in the lemma then follows from 
  \eqref{eq:piecewise-linear-alpha-lower-bound}
  and \eqref{eq:piecewise-linear-cflem-last}.
\end{proof}

\end{Theorem}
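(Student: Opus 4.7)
The natural approach is a case split based on whether the interpolant $\hat{y}$ lies inside the neighborhood $\hat{N}(\hat{x},\epsilon)$ used to define $\hat{T}(\hat{x},\epsilon)$, since the constraint $\hat{d}\in\hat{T}(\hat{x},\epsilon)$ behaves very differently in the two regimes. In the first case, we can exhibit a \emph{very good} subgradient (one that essentially linearizes $\hat{f}$ correctly at $\hat{y}$), while in the second case $\alpha$ must be bounded away from $0$ and we can afford to use a crude Lipschitz-type estimate, dividing by $\alpha^2$.

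\textbf{Case 1: $\hat{y} \in \hat{N}(\hat{x},\epsilon)$.} Let $i' \in \arg\max_i \hat{y}_i$ and take $\hat{d} = e_{i'}$. Since $e_{i'} \in \partial \hat{f}(\hat{y})$ by the subdifferential formula \eqref{eq:maxelt-subdiff}, Lemma \ref{lem:piecewise-linear-ysubgrad} places $e_{i'}$ in $\hat{T}(\hat{x},\epsilon)$, so it is a feasible choice for the minimization. Substituting and using that $\hat{f}(\hat{y}) = \hat{y}_{i'}$ while $\hat{f}(\hat{x}) = \max_i \hat{x}_i \ge \hat{x}_{i'}$, the quantity inside the min reduces to $\hat{x}_{i'} - \max_i \hat{x}_i$, which is non-positive. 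Hence the whole expression is $\le 0$, which is dominated by the RHS.

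\textbf{Case 2: $\hat{y} \notin \hat{N}(\hat{x},\epsilon)$.} Since $\|\hat{y}-\hat{x}\|_\infty = \alpha\|\hat{s}-\hat{x}\|_\infty > \epsilon$, we get the key lower bound $\alpha > \epsilon/\|\hat{s}-\hat{x}\|_\infty$. Now I would bound the numerator of the expression directly for any $\hat{d} \in \hat{T}(\hat{x},\epsilon)$: since every such $\hat{d}$ is a convex combination of basis vectors, $\|\hat{d}\|_1 \le 1$, so $-\langle \hat{y}-\hat{x},\hat{d}\rangle \le \|\hat{y}-\hat{x}\|_\infty \|\hat{d}\|_1 \le \alpha\|\hat{s}-\hat{x}\|_\infty$. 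The $\hat{f}(\hat{y})-\hat{f}(\hat{x})$ piece is bounded by $\max_i \alpha(\hat{s}_i - \hat{x}_i) \le \alpha \|\hat{s}-\hat{x}\|_\infty$ using that the max of a sum is at most the sum of maxes. Adding gives $2\alpha\|\hat{s}-\hat{x}\|_\infty$ in the numerator. Dividing by $\alpha^2$ and using $1/\alpha \le \|\hat{s}-\hat{x}\|_\infty/\epsilon$ yields exactly the claimed bound $(2/\epsilon)\|\hat{s}-\hat{x}\|_\infty^2$.

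\textbf{Main obstacle.} The trickiest step is Case 2, where two quantities must interact cleanly: one needs both the $\ell_\infty$-Lipschitz bound (which is free because the subgradients of a max are in the simplex and so have unit $\ell_1$ norm) and the lower bound on $\alpha$ that exits the neighborhood. The factor of $2$ comes precisely from the two additive terms --- the jump in $\hat{f}$ and the inner product --- each contributing $\alpha\|\hat{s}-\hat{x}\|_\infty$. Choosing the right norm pair ($\ell_\infty$ primal, $\ell_1$ dual) and carefully matching the $\alpha$-lower bound against the $1/\alpha^2$ factor is what makes the balance work and produces the $1/\epsilon$ scaling that drives the $O(1/\epsilon)$ curvature bound used throughout the paper.
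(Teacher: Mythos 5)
Your proof is correct and follows essentially the same route as the paper's: Case 1 exhibits the feasible subgradient $e_{i'}$ at the maximizer of $\hat{y}$ to make the expression nonpositive, and Case 2 combines the lower bound $\alpha \ge \epsilon / \|\hat{s}-\hat{x}\|_\infty$ with the $\ell_1$--$\ell_\infty$ H\"older estimate on the two additive terms to obtain the $2\alpha\|\hat{s}-\hat{x}\|_\infty$ numerator. No gaps; the accounting of where the factor of $2$ and the $1/\epsilon$ scaling come from matches the paper's argument exactly.
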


Denote the diameter of a set $S$ with respect to the $q$-norm
by $\mathrm{diam}_q(S) = \max_{x,y \in S} \|x - y\|_q$.
Then, we can further bound \eqref{eq:maxelt-Cf-distance} by the diameter
of the bounded feasible set.

\begin{Corollary}
  Given the problem of minimizing a piecewise linear function
  $f(x) = \max_i (Ax + b)_i$ over a bounded set $D$,
  the corresponding $C_f$ is bounded above by
  $ C_f(\epsilon) \le \frac{2}{\epsilon} \left( \mathrm{diam}_\infty(A~D) \right)^2$.
  \label{cor:piecewise-linear-Cfbound}
\end{Corollary}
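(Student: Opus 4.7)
The plan is to reduce the corollary to the immediately preceding theorem via the affine structure $f(x) = \hat{f}(Ax+b)$, and then replace the pointwise $\|\hat{s} - \hat{x}\|_\infty$ by a uniform bound coming from the feasible set $D$.

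First I would unpack the definition of $C_f(\epsilon)$ from \eqref{eq:cf-def}. Fix an arbitrary $x, s \in D$ and $\alpha \in [0,1]$, set $y := x + \alpha(s-x)$, and introduce the images $\hat{x} := Ax+b$, $\hat{s} := As+b$, $\hat{y} := Ay+b$. Because the affine map $u \mapsto Au+b$ commutes with convex combinations, we have $\hat{y} = \hat{x} + \alpha(\hat{s} - \hat{x})$. From \eqref{eq:piecewise-fdef}, $f(y) - f(x) = \hat{f}(\hat{y}) - \hat{f}(\hat{x})$, which handles the function-value difference.

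Next I would translate the inner minimization in the definition of $C_f$ into the ``hatted'' space. By Lemma \ref{lem:piecewise-linear-affineT}, every $d \in T(x, \epsilon)$ has the form $d = A^T \hat{d}$ with $\hat{d} \in \hat{T}(Ax+b, \epsilon) = \hat{T}(\hat{x}, \epsilon)$, and conversely each such $\hat{d}$ gives an element of $T(x,\epsilon)$. For such pairs,
\begin{align*}
\langle y - x, d \rangle = \langle y-x, A^T \hat{d}\rangle = \langle A(y-x), \hat{d}\rangle = \langle \hat{y} - \hat{x}, \hat{d}\rangle,
\end{align*}
so the inner minimization over $d \in T(x,\epsilon)$ coincides exactly with the inner minimization over $\hat{d} \in \hat{T}(\hat{x}, \epsilon)$ in \eqref{eq:maxelt-Cf-distance}. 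Applying the preceding theorem yields the bound
\begin{align*}
\min_{d \in T(x,\epsilon)} \frac{1}{\alpha^2}\bigl( f(y) - f(x) - \langle y - x, d\rangle \bigr) \le \frac{2}{\epsilon} \|\hat{s} - \hat{x}\|_\infty^2 = \frac{2}{\epsilon}\|As - Ax\|_\infty^2.
\end{align*}

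Finally, since $x, s \in D$ implies $Ax, As \in AD$, we have $\|As - Ax\|_\infty \le \mathrm{diam}_\infty(AD)$. Taking the supremum of the left-hand side over $x, s \in D$ and $\alpha \in [0,1]$ produces the claimed inequality $C_f(\epsilon) \le \frac{2}{\epsilon}(\mathrm{diam}_\infty(AD))^2$. There is no real obstacle here; the proof is essentially bookkeeping, and the only thing worth being careful about is the correspondence $T \leftrightarrow \hat{T}$ via $A^T$, which is already handled cleanly by Lemma \ref{lem:piecewise-linear-affineT}, so once that identification is in place the corollary falls out immediately.
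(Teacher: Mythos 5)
Your proposal is correct and follows essentially the same route as the paper: the paper's own (much terser) proof simply invokes the preceding theorem's bound $\frac{2}{\epsilon}\|\hat{s}-\hat{x}\|_\infty^2$ and observes that $\|\hat{s}-\hat{x}\|_\infty \le \mathrm{diam}_\infty(A\,D)$ for $\hat{s}=As+b$, $\hat{x}=Ax+b$ with $x,s\in D$. The extra bookkeeping you supply --- the affine commutation $\hat{y}=\hat{x}+\alpha(\hat{s}-\hat{x})$ and the identification of the inner minimizations via Lemma \ref{lem:piecewise-linear-affineT} --- is exactly what the paper leaves implicit.
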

\begin{proof}
  This comes from the definition
  \begin{align}
    \mathrm{diam}_p(S) = \max_{x,y \in S} \|x - y\|_p
  \end{align}
  so that $\|\hat{s} - \hat{x}\|_\infty \le \mathrm{diam}_\infty(A~D)$,
  for all $\hat{s} = As + b, \hat{x} = Ax + b$ with $x,s \in D$. 
\end{proof}

\subsubsection{$\ell_1$-norm-regularized SVM}
\label{sec:l1-svm}

The first Machine Learning problem we analyze is the $\ell_1$-norm-regularized
SVM.
Since the $\ell_1$ norm is sparsity-inducing,
the regularization $\|w\|_1$ gives an optimization problem that finds
a separating hyperplane in only a small subset of the features.
This is expected to perform well in problems where there is a large number of
redundant or noisy features included with a few informative features.

Suppose we are given training examples
$(x_i, y_i) \in \R^d \times \{-1,1\}$.
To train a classifier on this input we start with a hard-margin variant of the $\ell_1$-norm-regularized SVM \cite{mangasarian1999-generalized-svm}:
\begin{align}
    \min_{w\in\R^d,\gamma\in\R}  \|w\|_1  
    \text{ s.t. }   y_i(w^T x_i - \gamma) \ge 1  ~\forall~  i
  \label{eq:hardmargin-l1svm-primal}
\end{align}
Define matrix $A^+ \in \R^{d \times m}$ that has the positive examples in the
columns, and $A^- \in \R^{d \times n}$ has the negative examples in the columns.
Bennet \& Bredensteiner \cite{bennett2000-svm-geometry} provide a
dual to the \emph{soft-margin} $\ell_1$-norm-regularized SVM
with hinge loss, related to polytope distance formulations of the $\ell_2$-norm-regularized SVM \cite{gartner2009-polytope-distance}:
\begin{align}
    \min_{u\in \Delta_m,v\in \Delta_n} \quad \|A^+ u - A^- v\|_\infty ~
    \text{s.t. }   
    0\leq u, v \le \frac{1}{R}.   
  \label{eq:bennet-l1svm-dual}
\end{align}
The elementwise upper bounds on $u$ and $v$ and the parameter $R$
come from the reduced convex hull construction in that paper,
to which we refer the interested reader. 
When $R = 1$, this yields a dual to the hard-margin problem. 
In the hard-margin case, the feasible set is $D := \Delta_m \times \Delta_n$,
where $\Delta_m$ and $\Delta_n$ denotes the unit simplices in $\R^m$
and $\R^n$ respectively,
and the feasible set is a subset of $D$ in the soft-margin case.
Therefore, using Corollary \ref{cor:piecewise-linear-Cfbound}, we get
  $C_f(\epsilon) \le \frac{2}{\epsilon} \left( \mathrm{diam}_\infty (A^+ \Delta_m) + \mathrm{diam}_\infty (A^- \Delta_n) \right)$.

\paragraph{\bf Frank Wolfe Subproblems.}

For the dual problem in \eqref{eq:bennet-l1svm-dual},
express our objective as $f(x) := \|Ax\|_\infty$,
for $A$ and $x$ such that $Ax = A^+ u - A^-v$.
We can then write $f$ as in
\eqref{eq:piecewise-linear-def}, where the
linear functions are given by the rows of $A$ and the rows of $-A$.
Define the approximate subdifferential as in \eqref{eq:piecewise-T-from-vertices}, and in this case
\begin{align}
    V(\hat{x}, \epsilon) = 
    \{e_i \; | \; \hat{x}_i \ge \|\hat{x}\|_\infty - 2\epsilon \}
    \cup \{-e_i \; | \; \hat{x}_i \le -\|\hat{x}\|_\infty + 2\epsilon \}.
  \label{eq:svm-T-vertices}
\end{align}

We now turn to the subproblems in \eqref{eq:alg-s-selection}
for this  $T(x,\epsilon)$ and feasible region $D$.
Observe that the maximum over $d$ will have a solution equal
to a vertex of $T(x, \epsilon)$.
An optimal $z$ can then be found by:
\begin{align}
    \min_{z \in \R^{m + n}, ~\mu \in \R} \quad & \mu \\
    \text{s.t.} \quad
    & \langle Az - Ax, d_v \rangle \le \mu
    \quad \text{for } d_v \in V(Ax, \epsilon) \\
    & \mathbf{1}^T z_+ = 1, \mathbf{1}^T z_- = 1,0 \le z \le \frac{1}{R},  
  \label{eq_z_sub}
\end{align}

where $z_+$ and $z_-$ are the indices of the positive and negative examples
respectively.
There will be an optimal $(z, \mu)$ such that $n+m+1$ linearly independent
constraints are active, and
$z_i \le \frac{1}{R}$
will be active for no more than $2 \lceil R \rceil$ indices $i$.
This means that there is a solution $z$ with no more than
$2 \lceil R \rceil + |V(Ax, \epsilon)| - 1$ nonzeros.  Clearly, since the number of nonzero entries is strictly less than the total number of dimensions, the algorithm is sublinear.

{\em Remarks.} Here we note that the construction of coreset depends on the assumption that each iteration involves only $O(1)$ examples. This is true under the conditions which we describe now. Typically, $R$ will be small for well-separated classes. Due to complementary slackness,
for small $\epsilon$ we expect that $V(Ax, \epsilon)$ at the optimum
will correspond to the nonzero features in the sparse separating hyperplane.
This provides a central intuition behind the use of coresets.
When a problem is ``easy'' in a geometric or AI sense, having
well-separated classes with few relevant features,
the resulting coreset is small.
Conversely, in problems that are ``hard'' 
the coreset size may approach the size of the original dataset. We also note that the steps, $s_k$ may not be at a vertex of $D$, slightly complicating the coreset construction shown empirically in the later section. However, we \emph{still} see useful results for nonsmooth $f$ and polyhedral $D$ if $s$ is on a low-dimensional face of $D$, then $s$ is a convex combination of a small number of vertices of $D$. Then the step will introduce more than one “atom,” indeed all of the vertices of the lowest-dimensional face on which $s$ lies, but still a small number.

\paragraph{Tractability of subproblems}
If we look at the subproblems from a Computational Geometry perspective, our algorithm will look roughly like Gilbert's algorithm \cite{gilbert1966iterative}. This is similar to what is described in \cite{bennett2000-svm-geometry}, though the $\ell_1$-SVM case is not developed fully in that paper and the $T(x,e)$ construction introduces many additional issues. We see that $\ell_1$-SVM subproblems i.e.,  \eqref{eq_z_sub} is solved in $O(R |V(Ax, e)|)$ time, given which example points determine the axis-aligned bounding box of the $R$-reduced convex hull of each class. The bounding box can be computed in time $O(Rnd)$ and $O(Rd)$ space: a cheap one-time cost performed before the optimization. One can also construct the bounding box lazily in $O(Rnd^*)$ time for $d^*$ the cardinality of the union of all $V(Ax, e)$ across all iterations, and for problems that are easy in a geometric sense, we will have $d^* \ll d$ making the procedure sublinear. Hence it is important to see that one would \emph{not} necessarily directly perform the optimization in \eqref{eq:alg-s-selection} using a generic solver. 

\paragraph{Coreset Bound}
The above results for $\ell_1$-norm SVM provide the necessary lemmas to
show a \emph{coreset} bound for this problem.
The construction of the coreset in this work follows the same intuition as when
using the ordinary Frank Wolfe method \cite{clarkson2008-frank-wolfe}.
For each iterate $x_k \in \R^n$,
and the corresponding subproblem solutions $s_k$,
each index into the vector represents an input \emph{example}.
We may take those examples for which the index in $s_k$ is nonzero,
and call this set $S_k$.
Using Algorithm \ref{alg:nonsmooth-stepsizek} to build a coreset,
we take the coreset at iteration $K$ to be $\bar{S}_K = \bigcup_{k=1}^K S_k$.

Suppose we seek a coreset that will provide a $ \epsilon-$ approximation
solution.
By Theorem \ref{thm:convergence}, if we choose $K$ to ensure that both
sides of \eqref{eq:approx-bound} are (multiplicatively) bounded above by $(1 + \epsilon)$,
then the union $\bar{S}_K$ will provide this coreset.
This approximation bound will be satisfied for
\begin{align}
  K \ge \frac{(2^{5/2} L + 2^{3/2} D_f)^2}{(1 + \epsilon)^2} - 2.
  \label{eq:K-bound}
\end{align}
The set of examples $\bar{S}_K$ will then be a $(1+\epsilon)$ coreset
for any $K$ satisfying this bound.
We know from the above that the $\ell_1$-SVM dual objective will be $\|A\|-$Lipschitz,
and that $C_f(\epsilon) \le \frac{D_f}{\epsilon}$
for $D_f = 2 \left( \mathrm{diam}_\infty(A^+ \Delta_m) + \mathrm{diam}_\infty(A^- \Delta_n) \right)$.

For a coreset to be empirically useful, we also want it to be of small cardinality.
The next step is therefore to bound $|\bar{S}_K|$.
We earlier showed that $|S_k| \le 2 \lceil R \rceil + d^* - 1$ for all $k$.
Using a union bound,
\[
  |\bar{S}_K| \le K \left( 2 \lceil R \rceil + d^* - 1 \right).
\]
Combining these, we can use Algorithm \ref{alg:nonsmooth-stepsizek}
to construct a $\epsilon$ coreset of size $O\left( \frac{1}{\epsilon^2} \right)$. 
\subsubsection{Balanced development}
\label{sec:balanced-devel}


A direct application of the piecewise linear discussion in
Section 4.1 
is the problem of \emph{balanced development},
see \cite{Nesterov05primal-dualsubgradient}.
Let $A=(a_1,...,a_N)\in\mathbb{R}^{m\times n}$ where $m$ is the number of attributes and $n$ is the number of products. Hence, the entry $A_{ij}$ is the \textit{concentration} of attribute $i $ in product $ j$.
Let $b_i$ be the minimum concentration of attribute $i$ that we require
in our solution.
Also, $p_j\geq 0$ is the unitary price of product $j$.
The problem is then to make an investment decision between the products
to maximize the minimum amount of each attribute.
In \cite{Nesterov05primal-dualsubgradient}, the author provides a dual problem:
\begin{align}
  \tau^* = \min_y\{\phi(y):y\in\Delta_m\}
  \label{eq:balacned-development-primal}
  \text{ where }
  \phi(y)=\max_{1\leq j\leq n}\frac{1}{p_j}\langle a_j,Ey\rangle.
\end{align}
$E \in \R^{m \times m}$ is diagonal with $E_{ii} =\frac{1}{b_i}$.
Corollary \ref{cor:piecewise-linear-Cfbound} implies
$C_f(\epsilon) \le \frac{2}{\epsilon} (\mathrm{diam}_\infty(A\Delta_m))^2$,
and combined with the simplicial feasible region this provides a coreset over the attributes.

\subsection{Multiway Graph Cuts}
\label{sec:graph-cuts}
We consider the model for multi-label graph cuts in \cite{calinescu1998-multiway-cut,niu2011-hogwild}.
This problem seeks to assign one of $d$ labels to each of $n$ nodes.
The graph structure is determined by similarity matrix $W$,
with weight $w_{uv}$ between vertices $u$ and $v$.
A convex relaxation of this problem is given by:
\begin{align}
    \min_x \quad  \sum_{u \sim v} w_{uv} \| x_u - x_v \|_1 \quad
    \text{s.t.} \quad 
    x_v \in \Delta_d \text{ for } v = 1,...,n.
\end{align}
We further constrain a set of ``seed'' nodes to have $x_v = e_i$
if the label of seed $v$ is $i \in \{1,...,d\}$.
Let $X \in \R^{d \times n}$ be the decision variable,
a matrix such that each column is the soft labeling for a node.
If we let $B$ be the incidence matrix for an orientation of the graph,
and $I$ the $d \times d$ identity matrix, then we can rewrite the objective as:
\begin{align}
  f(\mathrm{vec}(X)) =
  \|(B \otimes I) \mathrm{vec}(X)\|_1 = \|\mathrm{vec}(X B^T)\|_1,
\end{align}
where $\otimes$ is the matrix Kronecker product.

Now define $\hat{f}(\hat{x}) := \|\hat{x}\|_1$ so that
$f(\mathrm{vec}(X)) = \hat{f}((B \otimes I) \mathrm{vec}(X))$.
We start by deriving the subdifferentials 
for $\hat{f}$.
For the approximate subdifferentials of $\hat{f}$ we choose the neighborhoods
$\hat{N}(\hat{x}, \epsilon)$ to be the $\ell_1$- ball of radius $\epsilon$ at $x$.
\begin{Lemma} Similar to lemma \ref{lem:piecewise-linear-ysubgrad}, the subdifferential of the neighborhood is a subset of the approximate differential, that is, if $\hat{y} \in \hat{N}(\hat{x}, \epsilon)$, then
  $\partial \hat{f}(\hat{y}) \subseteq \hat{T}(\hat{x}, \epsilon)$.
\end{Lemma}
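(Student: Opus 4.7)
The statement mirrors Lemma \ref{lem:piecewise-linear-ysubgrad} from the supremum-of-linear-functions section, so I would follow the same two-case structure dictated by the piecewise definition of $\hat{T}(\hat{x},\epsilon)$ in \eqref{eq:nbrgrad-Tdef}. The plan is to dispatch the ``non-differentiable'' branch by an immediate appeal to the definition, and then handle the ``differentiable-everywhere'' branch by showing that the gradient of $\|\cdot\|_1$ is actually constant on the connected set $\hat{N}(\hat{x},\epsilon)$.

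First, suppose $\hat{f}$ is non-differentiable at some point of $\hat{N}(\hat{x},\epsilon)$. Then by \eqref{eq:nbrgrad-Tdef}, $\hat{T}(\hat{x},\epsilon) = \bigcup_{u \in \hat{N}(\hat{x},\epsilon)} \partial \hat{f}(u)$. Since $\hat{y} \in \hat{N}(\hat{x},\epsilon)$, we have $\partial \hat{f}(\hat{y})$ appearing as one of the sets in that union, giving $\partial \hat{f}(\hat{y}) \subseteq \hat{T}(\hat{x},\epsilon)$ immediately.

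Second, suppose $\hat{f}$ is differentiable everywhere on $\hat{N}(\hat{x},\epsilon)$. Then $\hat{T}(\hat{x},\epsilon) = \{\nabla \hat{f}(\hat{x})\}$, and since $\hat{y}$ lies in the neighborhood, $\partial \hat{f}(\hat{y}) = \{\nabla \hat{f}(\hat{y})\}$. I must show these two singletons coincide. The key observation is that for the $\ell_1$-norm, differentiability at a point $u$ is equivalent to all coordinates of $u$ being nonzero, in which case $\nabla \hat{f}(u) = \mathrm{sign}(u)$ coordinatewise. Since $\hat{f}$ is assumed differentiable at every point of $\hat{N}(\hat{x},\epsilon)$, no coordinate vanishes anywhere on this set. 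Because $\hat{N}(\hat{x},\epsilon)$ is a closed $\ell_1$-ball, it is path-connected; if $\mathrm{sign}(\hat{u}_i)$ ever differed between two points of the neighborhood for some coordinate $i$, the intermediate value theorem applied along a connecting segment would produce a point where the $i$-th coordinate is zero, contradicting the assumption. Hence $\mathrm{sign}(\cdot)$ is constant on $\hat{N}(\hat{x},\epsilon)$, giving $\nabla \hat{f}(\hat{y}) = \nabla \hat{f}(\hat{x})$ and the desired inclusion holds as an equality.

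I do not expect any substantive obstacles here: the only nontrivial ingredient is the connectedness-plus-sign-constancy argument in the second case, which is essentially the $\ell_1$ analogue of the argument used in Lemma \ref{lem:piecewise-linear-ysubgrad} (where the constant-$\arg\max$ plays the role of the constant sign vector). The lemma serves a purely structural role, enabling later invocations of the curvature bound for $\hat{f} = \|\cdot\|_1$ in the graph-cut analysis, just as the piecewise-linear version did for the $\ell_\infty$ supremum case.
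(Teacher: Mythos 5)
Your proof is correct and follows essentially the same two-case structure as the paper's: the nondifferentiable branch is immediate from the definition of $\hat{T}$, and the differentiable branch reduces to showing the sign vector is constant on the neighborhood. The only cosmetic difference is that the paper deduces sign constancy directly from $|\hat{x}_i| > \epsilon$ (forced by differentiability on the whole $\ell_1$-ball) together with $\|\hat{y}-\hat{x}\|_1 \le \epsilon$, whereas you reach the same conclusion via connectedness and the intermediate value theorem.
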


\begin{proof}
  If $\hat{f}$ is nondifferentiable on $\hat{N}(\hat{x}, \epsilon)$,
  it follows from the definition.

  Otherwise, if $\hat{f}$ is differentiable at all points on
  $\hat{N}(\hat{x}, \epsilon)$, then $|\hat{x}_i| > \epsilon$ for all $i$.
  Then because $\|\hat{y} - \hat{x}\|_1 \le \epsilon$,
  $\mathrm{sign}(\hat{y}_i) = \mathrm{sign}(\hat{x}_i)$ for all $i$,
  and $\nabla \hat{f}(\hat{y}) = \nabla \hat{f}(\hat{x})$
  and we have the Lemma statement with equality.
\end{proof}

\begin{Theorem}
  Consider any $\hat{x}, \hat{s}$ and $\alpha \in [0,1]$.
  Let $\hat{y} := \hat{x} + \alpha (\hat{s} - \hat{x})$.
  Then
  \begin{align}
    \min_{\hat{d} \in \hat{T}(\hat{x}, \epsilon)} \frac{1}{\alpha^2} \left( \hat{f}(\hat{y}) - \hat{f}(\hat{x}) - \langle \hat{y} - \hat{x}, \hat{d} \rangle \right) \le \frac{2}{\epsilon} \|\hat{s} - \hat{x}\|_1
  \end{align}
\end{Theorem}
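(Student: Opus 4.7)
The plan is to mirror the proof of the piecewise-linear theorem proved earlier in the paper, replacing the $\ell_\infty$ norm with $\ell_1$ throughout and invoking the lemma stated immediately before this theorem. The argument splits into two cases depending on whether $\hat{y}$ lies inside the $\ell_1$-ball $\hat{N}(\hat{x},\epsilon)$ that defines the approximate subdifferential.

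Case 1: $\hat{y} \in \hat{N}(\hat{x},\epsilon)$. By the preceding lemma, $\partial\hat{f}(\hat{y}) \subseteq \hat{T}(\hat{x},\epsilon)$, so I would pick $\hat{d} \in \partial\hat{f}(\hat{y})$ as the candidate minimizer. Applying the subgradient inequality at $\hat{y}$ (evaluated against the point $\hat{x}$) gives $\hat{f}(\hat{x}) \ge \hat{f}(\hat{y}) + \langle \hat{x} - \hat{y}, \hat{d}\rangle$, which rearranges to $\hat{f}(\hat{y}) - \hat{f}(\hat{x}) - \langle \hat{y} - \hat{x}, \hat{d}\rangle \le 0$. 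Hence the LHS of the theorem is nonpositive in this case, trivially below the RHS.

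Case 2: $\hat{y}\notin\hat{N}(\hat{x},\epsilon)$. Here $\|\hat{y}-\hat{x}\|_1 = \alpha\|\hat{s}-\hat{x}\|_1 > \epsilon$, which produces the key lower bound $\alpha > \epsilon/\|\hat{s}-\hat{x}\|_1$ on the step size — the analog of the $\alpha$ estimate used in the piecewise-linear proof. I would pick $\hat{d}\in\partial\hat{f}(\hat{x})\subseteq\hat{T}(\hat{x},\epsilon)$ (this inclusion is immediate since $\hat{x}\in\hat{N}(\hat{x},\epsilon)$). Every subgradient of $\|\cdot\|_1$ satisfies $\|\hat{d}\|_\infty \le 1$, so the $1$-Lipschitz bound $\hat{f}(\hat{y}) - \hat{f}(\hat{x}) \le \|\hat{y}-\hat{x}\|_1$ together with the Hölder estimate $|\langle \hat{y}-\hat{x},\hat{d}\rangle| \le \|\hat{y}-\hat{x}\|_1$ bounds the numerator by $2\|\hat{y}-\hat{x}\|_1 = 2\alpha\|\hat{s}-\hat{x}\|_1$. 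Dividing by $\alpha^2$ and substituting the Case 2 lower bound on $1/\alpha$ gives the stated right-hand side.

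The main obstacle will be the final algebraic step of Case 2: correctly combining the numerator bound $2\alpha\|\hat{s}-\hat{x}\|_1$ with the lower bound $1/\alpha < \|\hat{s}-\hat{x}\|_1/\epsilon$ so that the resulting quotient matches the RHS $\frac{2}{\epsilon}\|\hat{s}-\hat{x}\|_1$ claimed in the theorem. This is parallel to the last step of the piecewise-linear proof, where a similar cancellation involving $\alpha$ and the diameter emerged; the $\ell_1$ structure and the specific choice of the $\ell_1$-ball as the neighborhood are what make the constants line up. A secondary, minor concern is checking that the subgradient chosen in each case is actually admissible in $\hat{T}(\hat{x},\epsilon)$, which is handled respectively by the preceding lemma and by the trivial inclusion $\hat{x}\in\hat{N}(\hat{x},\epsilon)$.
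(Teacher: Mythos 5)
Your proposal follows the paper's proof essentially step for step: the same two-case split on whether $\hat{y}\in\hat{N}(\hat{x},\epsilon)$, the same use of the preceding lemma to admit $\partial\hat{f}(\hat{y})\subseteq\hat{T}(\hat{x},\epsilon)$ in Case 1, and the same lower bound $\alpha\ge\epsilon/\|\hat{s}-\hat{x}\|_1$ combined with $\|\hat{d}\|_\infty\le 1$ in Case 2. The one small divergence is in Case 1: you invoke the general subgradient inequality at $\hat{y}$, whereas the paper computes explicitly with $\hat{d}_i=\mathrm{sign}(\hat{y}_i)$ and $\langle\hat{y},\hat{d}\rangle=\|\hat{y}\|_1$; your version is cleaner and works for any convex $\hat{f}$, while the paper's is a concrete verification of the same fact. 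The ``obstacle'' you flag in Case 2 is real and you are right to be suspicious: dividing $2\alpha\|\hat{s}-\hat{x}\|_1$ by $\alpha^2$ and applying $1/\alpha\le\|\hat{s}-\hat{x}\|_1/\epsilon$ yields $\frac{2}{\epsilon}\|\hat{s}-\hat{x}\|_1^2$, not the unsquared quantity printed in the theorem. The paper's own proof ends with exactly the same computation and silently asserts the stated bound, so the exponent is a typo in the theorem statement rather than a gap in your argument --- note that the corollary immediately following bounds $C_f(\epsilon)$ by $\frac{2}{\epsilon}\left(\mathrm{diam}_1(AD)\right)^2$, consistent with the squared version.
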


\begin{proof} We verify two cases, \\

  \noindent \textbf{Case 1: $\hat{y} \in \hat{N}(\hat{x}, \epsilon)$}

  If we define the subgradient $\hat{d}$ elementwise by
  $\hat{d}_i = \mathrm{sign}(\hat{y}_i)$,
  then $\hat{d} \in \partial \hat{f}(\hat{y})$.
  Observe that $\langle \hat{y}, \hat{d} \rangle = \|y\|_1$.
  Then the left hand side of \eqref{eq:maxelt-Cf-distance}
  is bounded above by
  \begin{align}
    \hat{f}(\hat{y}) - \hat{f}(\hat{x}) - \langle \hat{y} - \hat{x}, \hat{d} \rangle
    &= \|\hat{y}\|_1 - \|\hat{x}\|_1 - \|\hat{y}\|_1 + \langle \hat{x}, \hat{d} \rangle\\ 
    &= \langle \hat{x}, \hat{d} \rangle - \|\hat{x}\|_1
  \end{align}
  And because $\|\hat{d}\|_\infty \le 1$, this quantity is nonpositive.
  We therefore only need to bound the second case, next.

  \noindent \textbf{Case 2: $\hat{y} \notin \hat{N}(\hat{x}, \epsilon)$}
  Here we have the lower bound on $\alpha$ that
  \begin{align}
    \alpha \ge \frac{\epsilon}{\| \hat{s} - \hat{x} \|_1}
  \end{align}
  and because $\|\hat{d}\|_\infty \le 1$,
  \begin{align}
     \hat{f}(\hat{y}) - \hat{f}(\hat{x}) - \langle \hat{y} - \hat{x}, \hat{d} \rangle
    &\le \|\hat{y}\|_1 - \|\hat{x}\|_1 + \|\hat{y} - \hat{x}\|_1 \\
    & = \|\hat{x} + \alpha (\hat{s} - \hat{x})\|_1 - \|\hat{x}\|_1 + \alpha \|\hat{s} - \hat{x}\|_1 \\
    \intertext{Using the triangle inequality on the first term,}
    &\le 2 \alpha \|\hat{s} - \hat{x}\|_1
  \end{align}
  The theorem statement then follows from this inequality and the bound on $\alpha$.
\end{proof}

So, much like in the piecewise linear case, we have:

\begin{Corollary}
  Given the problem of minimizing a piecewise linear function
  $f(x) = \|Ax + b\|_1$ over a bounded set $D$,
  the corresponding $C_f$ is bounded above by
  $ C_f(\epsilon) \le \frac{2}{\epsilon} \left( \mathrm{diam}_1(AD) \right)^2$.
  \label{cor:1norm-Cfbound}
\end{Corollary}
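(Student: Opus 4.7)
The plan is to mirror the derivation of Corollary \ref{cor:piecewise-linear-Cfbound}, using the per-point bound established in the theorem immediately preceding the corollary (the $\ell_1$ analogue of the max-element bound) together with an affine chain-rule for the approximate subdifferential $T$. First I would set $\hat{f}(\hat{x}) = \|\hat{x}\|_1$, write $f(x) = \hat{f}(Ax+b)$, and define a neighborhood $N(x,\epsilon)$ on the domain of $f$ matched to the $\ell_1$-ball neighborhood $\hat{N}(\hat{x},\epsilon)$ on the range of $A\cdot + b$, exactly as was done in the piecewise-linear section. By the standard chain rule $\partial f(u) = A^T \partial \hat{f}(Au+b)$, this gives an analogue of Lemma \ref{lem:piecewise-linear-affineT}, namely $T(x,\epsilon) = A^T \hat{T}(Ax+b,\epsilon)$.

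Next, I would fix arbitrary $x,s \in D$ and $\alpha \in [0,1]$, set $y := x+\alpha(s-x)$, and push everything through $A$, writing $\hat{x} := Ax+b$, $\hat{s} := As+b$, $\hat{y} := Ay+b = \hat{x}+\alpha(\hat{s}-\hat{x})$. The two key identities are
\begin{align*}
f(y)-f(x) &= \hat{f}(\hat{y})-\hat{f}(\hat{x}),\\
\langle y-x,\, A^T\hat{d}\rangle &= \langle \hat{y}-\hat{x},\,\hat{d}\rangle,
\end{align*}
so that parametrizing $d = A^T \hat{d}$ with $\hat{d}$ ranging over $\hat{T}(\hat{x},\epsilon)$ identifies the inner minimization over $T(x,\epsilon)$ in \eqref{eq:cf-def} with the inner minimization over $\hat{T}(\hat{x},\epsilon)$ appearing in the preceding theorem.

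Then I would invoke that theorem. Strictly speaking, its proof established $\frac{1}{\alpha^2}(\text{numerator}) \le 2\alpha\|\hat{s}-\hat{x}\|_1 / \alpha^2$ in conjunction with the lower bound $\alpha \ge \epsilon/\|\hat{s}-\hat{x}\|_1$, which multiplied together yield the sharper bound $\frac{2}{\epsilon}\|\hat{s}-\hat{x}\|_1^2$; this quadratic dependence is what the corollary actually needs. Substituting $\hat{s}-\hat{x} = A(s-x)$ gives
\begin{equation*}
\min_{d\in T(x,\epsilon)} \frac{1}{\alpha^2}\bigl(f(y)-f(x)-\langle y-x,d\rangle\bigr) \;\le\; \frac{2}{\epsilon}\|A(s-x)\|_1^2.
\end{equation*}
Since $As, Ax \in AD$, the factor $\|A(s-x)\|_1 = \|As - Ax\|_1$ is bounded above by $\mathrm{diam}_1(AD)$, uniformly in the choices of $x,s,\alpha$. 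Taking the supremum over $x,s\in D$ and $\alpha\in[0,1]$ in the definition \eqref{eq:cf-def} of $C_f(\epsilon)$ yields the claimed bound $C_f(\epsilon) \le \frac{2}{\epsilon}(\mathrm{diam}_1(AD))^2$.

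The only nontrivial point — the likely obstacle — is the chain-rule identity $T(x,\epsilon) = A^T \hat{T}(Ax+b,\epsilon)$. One must pick the neighborhood $N(x,\epsilon)$ carefully so that $A\,N(x,\epsilon)+b = \hat{N}(Ax+b,\epsilon)$ and so that nondifferentiability of $f$ on $N(x,\epsilon)$ corresponds exactly to nondifferentiability of $\hat{f}$ on $\hat{N}(Ax+b,\epsilon)$; restricting directions to lie in $\mathrm{im}(A^T)$ as was done in Section \ref{sec:piecewise-linear} accomplishes this. Once that identification is in place, the rest is a direct pushforward of the previous theorem through the linear map $A$.
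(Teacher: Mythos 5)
Your proposal matches the paper's (largely implicit) argument: the corollary is obtained exactly as Corollary \ref{cor:piecewise-linear-Cfbound}, by pushing the per-point bound of the preceding theorem through the affine map via $T(x,\epsilon) = A^T \hat{T}(Ax+b,\epsilon)$ and bounding $\|A(s-x)\|_1$ by $\mathrm{diam}_1(AD)$ before taking the supremum in \eqref{eq:cf-def}. You are also right on the one subtle point: the preceding theorem's displayed bound should read $\frac{2}{\epsilon}\|\hat{s}-\hat{x}\|_1^2$, since its proof combines $2\alpha\|\hat{s}-\hat{x}\|_1/\alpha^2$ with $\alpha \ge \epsilon/\|\hat{s}-\hat{x}\|_1$ to give the squared quantity, which is exactly what the corollary's $(\mathrm{diam}_1(AD))^2$ requires.
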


Note that in the case of multiway graph cuts,  $D$ consists of the Cartesian
product of $n$ simplices of dimension $d$.
We therefore expect the bound from Corollary \ref{cor:1norm-Cfbound}
will be $\Omega(n)$. The tractability, coreset (and hence the sublinearity)  discussion in this case follows directly from the $\ell_1$-regularized SVM case.

\subsection{1-median}
\label{1-median}
We can express the 1-median problem as an optimization over the simplex of
convex combinations of the input points.
Suppose we are given a (multi-)set of points
$\mathcal{P} = \{p_1, ..., p_n\} \subset \R^d$.
Let $A \in \R^{d \times n}$ be a matrix with $p_i$ in column $i$.
We can then write the 1-median problem as:
\begin{align}
    \min_{x \in \Delta_n} f(x) := \hat{f}(Ax) := \frac{1}{n} \sum_{i=1}^n \hat{f}_i(Ax) := \frac{1}{n} \sum_{i=1}^n \|Ax - p_i\|_2.
  \label{eq:median-obj-def}
\end{align}
The notation $\|\cdot\|$ denotes the $\ell_2$ norm.
Here, $f$ is nondifferentiable for any $x$ such that $Ax \in \mathcal{P}$,
and differentiable elsewhere.
We show that
$C_f(\epsilon) \le \frac{2}{\epsilon} \mathrm{diam}_2(A \Delta_n)^2$,
and extend this to a coreset result.

The subdifferentials are given by:
\begin{flalign}
  \partial f(x) &= A^T \partial \hat{f}(Ax) = \frac{1}{n} A^T \sum_{i=1}^n \partial \hat{f}_i(Ax) \\
  \partial \hat{f}_i(Ax) &= \begin{cases}
    \{\frac{Ax - p_i}{\|Ax - p_i\|}\} & \text{if } Ax \neq p_i \\
    \bar{\mathcal{B}}(0, 1) & \text{if } Ax = p_i
  \end{cases}
\end{flalign}
Note that the $\frac{1}{n}$ factor in \eqref{eq:median-obj-def}
is necessary to produce sensible approximation bounds.
If we duplicate each point in our input set,
this will double the value of the sum in the objective for any choice of
median.
Without the $\frac{1}{n}$ factor, any approximation bound expressed in terms of
the objective would necessarily be $\Omega(n)$.

The $k$-median problem has been considered previously in the coreset setting,
Har-Peled and Kushal \cite{har-peled2005-smaller-kmeans} give a coreset
construction by binning the points to derive a coreset of
size $O(k \epsilon^{-d} \log n)$.
For $k > 1$, the $k$-median problem is known to be nonconvex, so we here consider the
problem of calculating a single median.

For this problem, we choose neighborhoods as
\begin{align}
  N(x, \epsilon) = \{u \; | \; \|Au - Ax\| < \epsilon \text{ and } u - x \in \mathrm{im}(A^T) \}
  \label{eq:median-neighborhoods}
\end{align}
This meets the assumptions in the convergence bounds above,
as $x \in N(x, \epsilon)$,
and $N(x, \epsilon) \subseteq B(x, \|A^\dagger\| \epsilon)$.
$\|A^\dagger\|$ is the operator norm of the pseudoinverse.
The subdifferentials are described by:
\begin{align}
  \partial f(x) = \frac{1}{n} A^T \sum_{i=1}^n \partial \hat{f}_i(Ax), \quad
  \partial \hat{f}_i(Ax) = \begin{cases}
    \{\frac{Ax - p_i}{\|Ax - p_i\|}\} & \text{if } Ax \neq p_i \\
    \bar{\mathcal{B}}(0, 1) & \text{if } Ax = p_i
  \end{cases}
\end{align}

\subsection{Iteration bounds}
\label{sec:median-convergence-bounds}

In order to bound $C_f(\epsilon)$, we bound the quantity:
\begin{align}
    \frac{1}{n \alpha^2}
    \min_{u \in N(x, \epsilon)} & \left(
    \sum_{Au \neq p_i} \left( \|Ay - p_i\| - \|Ax - p_i\|
    \right. \right. \\ &\left.\left.- \left\langle Ay - Ax, \frac{Au - p_i}{\|Au - p_i\|} \right \rangle \right) \right. \\
    & \left. + E(u) (\|Ay - Au\| - \|Ax - Au\| - \|Ay - Ax\|)
    \right)
\end{align}
for any $x,s$ in the simplex and $y = x + \alpha (s - x)$ for $\alpha \in [0,1]$.
$E(u)$ is the number of points such that $Au = p_i$.
This is equivalent to the contents of the supremum in $C_f(\epsilon)$,
where we have substituted the analytic solution for the minimum
element of $\partial f(u)$.
By the triangle inequality, for any choice of $u$,
\begin{align}
  \|Ax - Au\| + \|Ay - Ax\| &\ge \|(Ax - Au ) + (Ay - Ax)\| \\ 
  &= \|Ay - Au\|
\end{align}
Since the nondifferentiable terms are nonpositive,
we only need to bound the differentiable terms.
First, assume that $y - x \in \mathrm{im}(A^T)$.
This is without loss of generality, as we can replace $s$ with its projection
$s' \in x + \mathrm{im}(A^T)$ and $y$ with $y' = x + \alpha(s' - x)$.
Because $s' - s \in \mathrm{ker}(A)$, this yields $Ay' = Ay$,
and the quantity we seek to bound will be identical.\\

\noindent \textbf{Case 1: $y \in N(x, \epsilon)$.}
If $Au = Ay$, the $i^\text{th}$ differentiable term is
\begin{align}
  & \|Ay - p_i\| - \|Ax - p_i\| - \left\langle Ay - Ax, \frac{Ay - p_i}{\|Ay - p_i\|} \right\rangle \\
  =& \frac{1}{\|Ay - p_i\|} \left(
    \langle Ay - p_i, Ay - p_i \rangle
    - \|Ax - p_i\| \|Ay - p_i\| \right.\\&\left.- \langle Ay - Ax, Ay - p_i \rangle
    \right) \\
  =& \frac{1}{\|Ay - p_i\|} \left(
    \langle Ax - p_i, Ay - p_i \rangle
    - \|Ax - p_i\| \|Ay - p_i\|
  \right),
\end{align}
which by the Cauchy-Schwartz inequality is nonpositive.
Therefore, in order to provide an upper bound on $C_f(\epsilon)$
we need only consider the second case below.

\noindent \textbf{Case 2: $y \notin N(x, \epsilon)$.}
First observe that in this case we have a lower bound on $\alpha$.
Specifically, because $y$ lies outside the neighborhood
around $x$ defined by \eqref{eq:median-neighborhoods}
(and in the set $x + \mathrm{im}(A^T)$ by assumption), 
$\|Ay - Ax\| = \alpha \|As - Ax\| \ge \epsilon$,
\begin{align}
  \alpha \ge \frac{\epsilon}{\|As - Ax\|} \ge \frac{\epsilon}{\mathrm{diam}(A \Delta_n)}
  \label{eq:median-alpha-bound}
\end{align}
Consider in the following \emph{any} choice of $u$ from the neighborhood
around $x$.
Plugging in $y = x + \alpha (s - x)$ to the differentiable terms:
\begin{align}
  & \frac{\sum_{Au \neq p_i} 
\|Ay - p_i\| - \|Ax - p_i\| - \left\langle Ay - Ax, \frac{Ax - p_i}{\|Ax - p_i\|} \right\rangle}{n \alpha^2}
    \\
  =&  \frac{\splitfrac{\sum_{Au \neq p_i}
\|Ax - p_i + \alpha (As - Ax)\|} {- \|Ax - p_i\| 
  - \alpha \left\langle As - Ax,  \frac{Au - p_i}{\|Au - p_i\|} \right\rangle}}{n \alpha^2} 
  \intertext{Using the triangle and Cauchy-Schwartz inequalities,}
  \le& \frac{1}{n \alpha^2} \sum_{Au \neq p_i} 2 \alpha \|As - Ax\|
  \le \frac{2 \|As - Ax\|}{\alpha}  \le \frac{2 \mathrm{diam}(A\Delta_n)}{\alpha}.
  \label{eq:median-differentiable-bound}
\end{align}
The combination of \eqref{eq:median-alpha-bound} and
\eqref{eq:median-differentiable-bound},
taken over all $x,s$ and all $\alpha$ that fall in this case,
gives:
\begin{align}
  C_f(\epsilon) \le \frac{2}{\epsilon} \mathrm{diam}(A \Delta_n)^2
\end{align}

Next, observe that if $\epsilon$ is small, as will be the case throughout
the optimization if we choose $\epsilon_k \in \Theta(\sqrt{\alpha_k})$ with
a small constant term,
then with high probability the neighborhood around the iterates for any $k$
that are not near the solution will be differentiable.
This will yield $T(x, \epsilon) = \{\nabla f(x)\}$
and the subproblems will be a linear program over the simplex,
which will only introduce no more than one nonzero as in
\cite{clarkson2008-frank-wolfe} which indeed means that the procedure is sublinear.

\subsection{Sparse PCA with constraints}
\label{sec:spca}

Sparse PCA is an important problem that is widely used for feature extraction and learning.  
In this problem we consider the formulation given in \cite{Grbovic2012-SparsePCAcons}. The objective of sparse PCA is to decompose a covariance matrix $S$ into near orthogonal  principal components $[u_1,...,u_k]$, while constraining the number of nonzero components of each $u_k$ to a required constant $r\in\mathbb{N}$. Hence the problem of maximizing the variance of component $u_k$ with the cardinality constraint can be written as,
\begin{align}
\max_u  \quad u^TSu\quad \text{s.t.}  \quad ||u||_2 =1, ~~\text{card}(u)\leq r
\end{align}
Writing down the convex relaxation of the problem after setting $U=uu^T$, we get the following optimization problem,
\begin{align}
\max_U  \quad \text{Tr}(SU)\quad\text{s.t.}  \quad \text{Tr}(U)=1, ~~1^T|U|1\leq r, ~~U =U^T,~~U \succeq 0, ~~ U\in\mathbb{R}^{n\times n}
\end{align}
The last inequality is the standard Linear Matrix Inequality that requires $U$ to be positive semidefinite. But note that we have exponential number of linear constraints in the form of $1^T|U|1\leq r$ constraint. Hence in  general this problem might practically take very long to solve unless one uses a specialized interior point method that exploits the structure of the constraints. Hence these constraints are taken to the objective with a penalty $\rho$ associated with it as,
\begin{align}
\max_U  \quad \text{Tr}(SU)- ||\rho\text{vec}(U)||_1 ~~
\text{s.t.}  \quad \text{Tr}(U)=1,  ~~U =U^T,~~U \succeq 0, ~~ U\in\mathbb{R}^{n\times n}
\end{align}
where vec$(U)$ denotes vectorized form of $U$.  To this formulation, \cite{Grbovic2012-SparsePCAcons} suggests adding a class of feature grouping constraints that is motivated by the maintenance scheduling problem \cite{cui2008maintenance}. Let us denote by $\textbf{1}$, a reliability vector such that  $\textbf{1}_i\in\left[0,1\right)$ is a probability that sensor $i$ will need maintenance during a certain time period. Then the \textit{reliability} matrix $L$ is constructed by setting each column to $\textbf{1}$ and then setting $L_{ii}=0~\forall ~1\leq i\leq n$. Using similar techniques we end up with the following formulation,
\begin{align}
\max_U & \quad \text{Tr}(SU)-||\rho\text{vec}(U)||_1 - ||\text{vec}(\eta U\circ \log(1-L))||_1\\
\text{s.t.} & \quad \text{Tr}(U)=1,  ~~U =U^T,~~U \succeq 0, ~~ U\in\mathbb{R}^{n\times n}
\end{align}
where $\eta$ is the penalty that controls the reliability of the component and $\circ$ is the standard Hadamard product or elementwise product. Let $\vec{U}$ denote the vectorized form of $U$ for simplicity in notation and similarly $\text{vec}(\log(1-L)) = \vec{l}$. Note that $\vec{l},\vec{U}\in\mathbb{R}^{n^2\times 1}$. Then the problem can be written as,
\begin{align}
\max_U & \quad \text{Tr}(SU)-\sum_{i=1}^{n^2} \left( \rho + \eta\vec{l}_i\right) |\vec{U}_i|\\
\text{s.t.} & \quad \text{Tr}(U)=1,  ~~U =U^T,~~U \succeq 0, ~~ U\in\mathbb{R}^{n\times n}
\end{align}
Noting that Tr$(SU) = \sum_{i=1}^n\sum_{j=1}^nS_{ij}U_{ij}=
\vec{S}^T\vec{U}=\sum_{i=1}^{n^2}\vec{S}_i\vec{U}_i$, we can write the objective function as a supremum of $2^{n^2}$ affine functions. Also using the min-max theorem we convert this to a minimization problem. Hence the problem becomes,
\begin{align}
\min_{{U}} & \quad \max_i \left(a_i^T\vec{U} \right) \\
\text{s.t.} & \quad \text{Tr}(U)=1,  ~~U =U^T,~~U \succeq 0, ~~ U\in\mathbb{R}^{n\times n}
\end{align}
Let $S:=\{U:\text{Tr}(U) = 1,~~U=U^T,~~U\succeq 0,~~U\in\mathbb{R}^{n\times n}\}$. 
\begin{Lemma}
The feasible set of sparse PCA with constraints problem, $S$ is bounded.
\end{Lemma}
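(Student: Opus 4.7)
The plan is to exploit the interplay between the positive semidefiniteness constraint and the trace constraint to get an immediate bound on the spectrum of any feasible $U$, which in turn bounds any matrix norm of interest.

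First I would observe that any $U \in S$ is symmetric and positive semidefinite, so it admits an eigendecomposition $U = Q \Lambda Q^T$ with real eigenvalues $\lambda_1, \dots, \lambda_n \ge 0$. The trace constraint $\mathrm{Tr}(U) = 1$ translates to $\sum_{i=1}^n \lambda_i = 1$. Combined with $\lambda_i \ge 0$ for each $i$, this forces $\lambda_i \in [0,1]$ for every $i$, so in particular the spectral norm satisfies $\|U\|_2 = \max_i \lambda_i \le 1$.

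Next I would upgrade this spectral bound to a bound in a norm suitable for showing boundedness in $\mathbb{R}^{n \times n}$, for example the Frobenius norm. Using the eigendecomposition,
\begin{equation*}
\|U\|_F^2 = \sum_{i=1}^n \lambda_i^2 \le \left(\sum_{i=1}^n \lambda_i\right)^2 = 1,
\end{equation*}
where the inequality uses non-negativity of the $\lambda_i$. Hence $\|U\|_F \le 1$ for every $U \in S$, so $S$ is contained in the unit Frobenius ball, which is bounded. Alternatively, one could note that PSD symmetry gives the elementwise bound $|U_{ij}|^2 \le U_{ii} U_{jj} \le (\mathrm{Tr}\,U)^2 = 1$, which also implies a uniform bound on all entries and hence on any matrix norm.

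There is no genuine obstacle here; the only minor subtlety is making sure the statement of ``bounded'' is with respect to a norm rather than anything fancier, and verifying that none of the additional constraints (symmetry, PSD, trace) are needed beyond what I have used. The argument is self-contained and does not invoke the $\max_i a_i^T \vec{U}$ objective at all, which confirms that boundedness is a purely geometric property of the spectraplex $S$. This boundedness is precisely what is required to invoke Corollary \ref{cor:piecewise-linear-Cfbound} to obtain a finite $C_f(\epsilon)$ for the Sparse PCA formulation.
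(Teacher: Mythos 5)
Your proof is correct and follows essentially the same route as the paper: diagonalize $U$, use positive semidefiniteness and the trace constraint to place the eigenvalues on the unit simplex, and conclude boundedness. You are slightly more careful than the paper in the final step — the paper simply asserts that $S$ is bounded iff the eigenvalue simplex is bounded, whereas you explicitly convert the spectral bound into a Frobenius-norm bound, which closes that small gap cleanly.
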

\begin{proof}
We know that $U\succeq 0,~~U=U^T \iff \lambda_i\geq 0$ where $\lambda_i$s are the eigenvalues of  $U$ and $\text{Tr}(U) = 1\iff \sum_i\lambda_i = 1$. Let $S_\lambda:= \{\lambda : \sum_i\lambda_i=1,~~\lambda\geq 0,~~\lambda\in\mathbb{R}^{n}\}$. Hence $S$ is bounded if and only if $S_\lambda$ is bounded. But $S_\lambda$ is trivially bounded since it is the simplex.
\end{proof}
Let $A\in\mathbb{R}^{2^{n^2}\times n^2}$ be the matrix containing all the $a_i$s stacked on top of each other. Hence our optimization problem can be written as,
\begin{align}
\min_{{U}}  \quad \max_i \left(A\vec{U}\right)_i \quad
\text{s.t.}  \quad U \in S
\end{align}
Now we define the neighborhoods in a similar way as Section \ref{sec:piecewise-linear},
\begin{align}
\hat{N}\left( \hat{\vec{U}},\epsilon\right)&= \{\hat{\vec{X}}\; | \; \|\hat{\vec{U}}-\hat{\vec{X}}\|_\infty\leq \epsilon\};\\
N\left(\vec{U},\epsilon\right)&=\{\vec{X}\; | \;\|A\vec{X}-A\vec{U}\|_\infty\leq \epsilon \text{ and } \vec{X}-\vec{U}\in \text{im}\left(A^T\right)\}.
\end{align}
Now define 
\begin{align}
V\left(\hat{\vec{U}},\epsilon\right)=\left\{e_i \; | \;\hat{\vec{U}}_i\geq \left( \max_j \hat{\vec{U}}_j\right) - 2\epsilon\right\}
\end{align}
With these constructions, similar results as in Section \ref{sec:piecewise-linear} are obtained. Note that unlike the other cases, the subproblems here are not as easy to solve and 
we might require advanced numerical optimization solvers to solve the subproblems efficiently in practice.

\subsection{Generic Subproblems}
As we saw in the above sections, specialized algorithms can be used to solve the subproblems that are induced by \eqref{eq:alg-s-selection} for many important applications. But occasionally, we might encounter problems that do not possess an inherent structure that can be easily exploited; for these cases, we now provide a generic method (or algorithm) for solving the subproblems. Specifically, when the optimization in \eqref{eq:alg-s-selection} is over a union of convex sets $C=\cup_{i=1}^{\mathfrak{n}} C_i$ where $C_i$ is a convex set  $\forall~i=1,...,\mathfrak{n}$, commonly encountered in practice. 
Recall that the subdifferential of a proper convex function is a compact set for all the points in the interior of the domain \cite{robinson2012-convex}. Assuming that we have access to the zeroth and first order oracles, $C$ can be explicitly described by the inequalities $g_{ij}(x)\leq 0$ where $g_{ij}$ are convex functions $\forall~i,j$ and the total set of constraints has finite cardinality. It is easy to show that the problem of optimizing a convex function $g$ over $C$ can be formulated as the following convex optimization problem,
\begin{align}
    &\min_{x} \quad  g(x) \\
    &\text{s.t.} \quad
    s_ig_{ij}\left(z_i/s_i\right)\leq 0,\quad \forall~i,j\\
    &\quad \quad 1^Ts=1,s\geq 0, x=\sum_{i=1}^{\mathfrak{n}}z_i
  \label{union}
\end{align}
There are two advantages of using this reformulation instead of solving $\mathfrak{n}$ different convex problems: (1) in certain cases, it might be easy to design algorithms to get approximate solutions 
much more efficiently \cite{garber2011approximating} and (2) standard solvers are actively being developed to handle perspective reformulations as described in \cite{gunluk2012perspective}. 
From this discussion, we can conclude that the subproblems can be solved in polynomial time.

\section{Experiments}
\label{sec:exps}
In this section, we present experiments to assess our convergence
rate numerically and provide some practical intuition for the bounds obtained in the sections above. 
We provide the running time whenever it is significant (more than a few seconds). 
To keep the extraneous effects of specialized libraries and the number of processors negligible, 
all our subproblems were solved  using a generic Linear Programming solver on a single core. 
The results show that the algorithm presented here is not only practical but can also be made competitive using simple alternative standard techniques 
to solve the subproblems.

\subsection{Support Vector Machines with $\ell_1$ regularization}
We first demonstrate results of our algorithm for $\ell_1$-regularized SVM on a
collection of four test datasets provided by the authors of
\cite{yuan2010-l1svm-optim} (see Figure \ref{fig:svm-convergence-2}).
\begin{figure}[!ht] \centering 
\includegraphics[width=\linewidth]{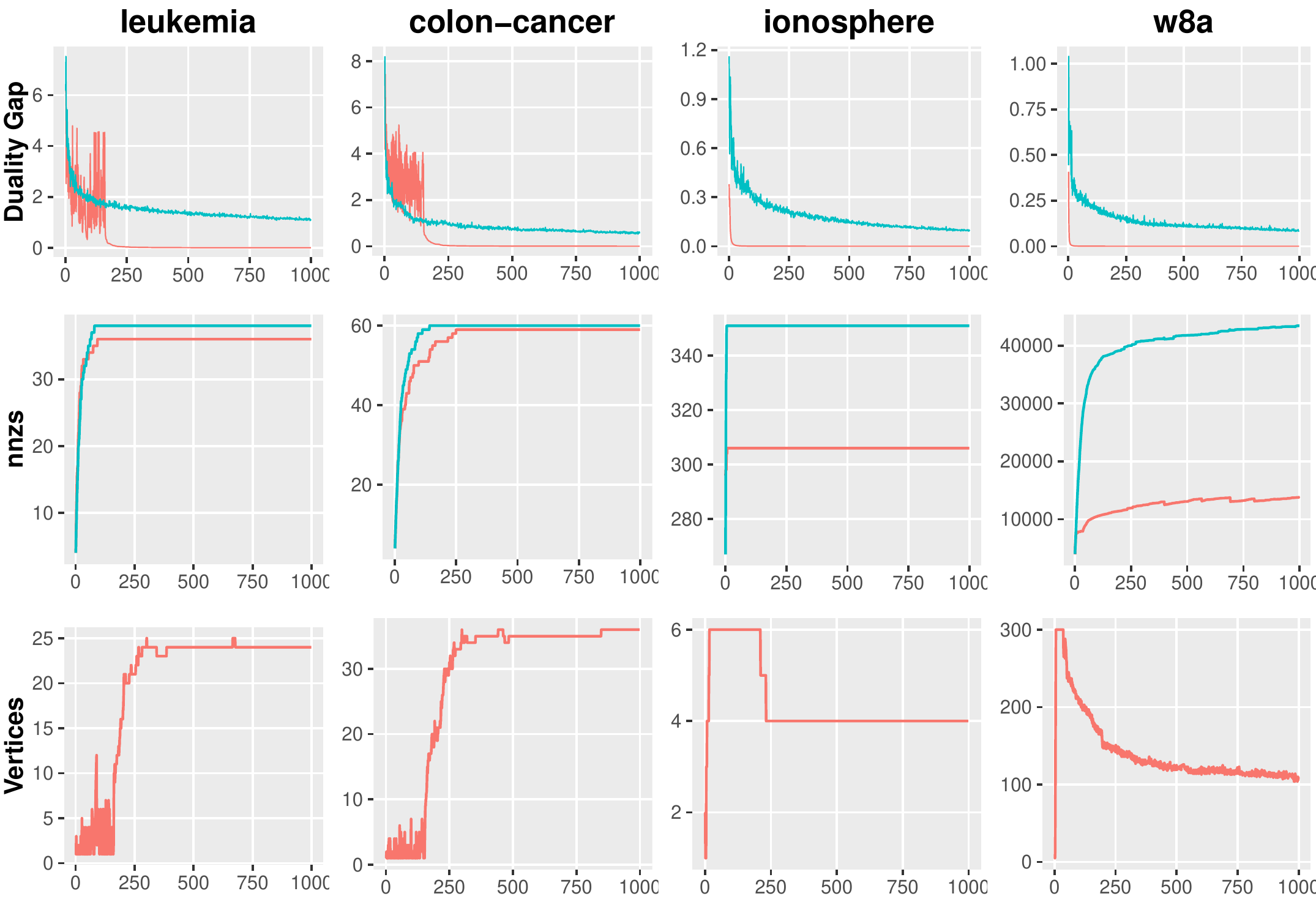}
\caption{ \label{fig:svm-convergence-2}  Convergence results for 
    Alg. \ref{alg:nonsmooth-stepsizek} (red) and
    randomized smoothing of \cite{lan2012-linopt-oracle} (blue) on the
    SVM datasets\eqref{eq:bennet-l1svm-dual}.}
\end{figure}

For this evaluation, we also compared our method with the randomized smoothing method of \cite{lan2012-linopt-oracle} shown in blue in Fig. \ref{fig:svm-convergence-2}.
The top row of plots in Figure \ref{fig:svm-convergence-2} ($y-$axis showing the duality gap) suggest that on all four datasets (leukemia, colon-cancer, ionosphere, w8a), the convergence of the baseline 
is slower as a function of the number of iterations. The second and the third row show that the coreset size and the vertices in $ |V(Ax,\epsilon)|$ increase {\em at most linearly} with the number of iterations as predicted in Section \eqref{sec:l1-svm}. 
We see that the number of vertices in $ |V(Ax,\epsilon)|$ remains {\em very small} throughout our experiments. 
While the convergence results shown for \cite{lan2012-linopt-oracle} are, in expectation, of
the same order as the ones we show here,
the bound is quite loose for these problems and we find that the practical convergence
rate can be slow. 
This is especially important as the computation time of each iteration of
that method increases with the iteration count, as more samples of the
gradient are drawn in later iterations (unlike our algorithm). 
Moreover, Figure \ref{fig:ls} shows that if we use a bisection line search instead of the step size policy suggested in the proof of Theorem \ref{thm:convergence}, 
our algorithm actually converges (measured using the duality gap) in fewer than 40 iterations 
making it practically fast while also keeping our theoretical results intact. 

\begin{figure}[!ht] \centering 
	\includegraphics[width=\linewidth]{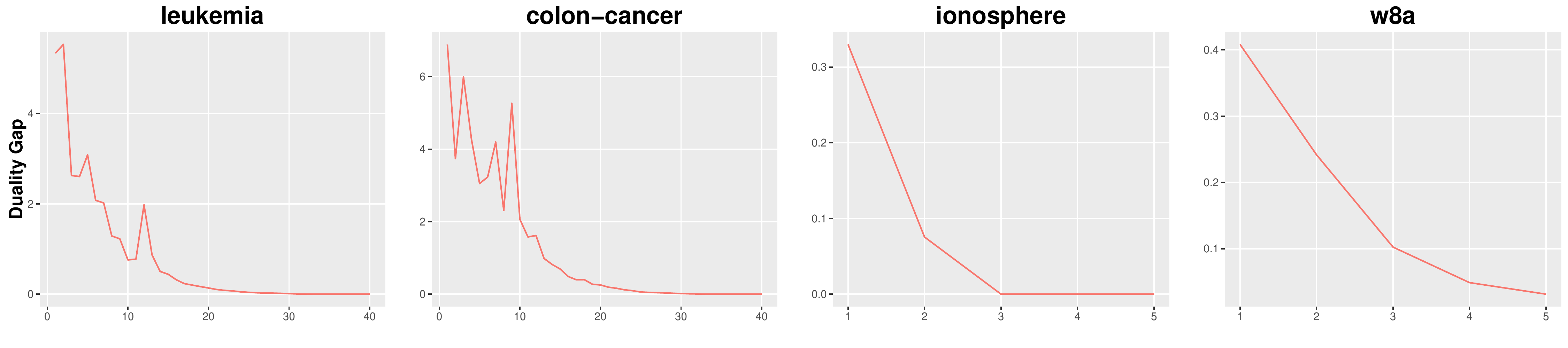}
	\caption{\label{fig:ls}   Convergence results for 
		Alg. \ref{alg:nonsmooth-stepsizek} using bisection line search.}
\end{figure}

\subsection{The 1-median problem}
We tested a simple Frank Wolfe based solver for the 1-median problem on synthetic data (similar to existing works for this problem). 
A set of points of size $n$ is sampled from a multivariate random Normal distribution.
To keep the presentation succinct, we discuss those parts of this experiment that demonstrate additional useful properties of 
the algorithm beyond those described for $\ell_1$-SVM. 
The left plot in the top row of Figure \ref{fig:median-n}  shows how the size of the coreset varies as
we increase $n$ whereas the right plot shows the number of iterations required for convergence. Importantly, the size of the coreset is \emph{independent} of $n$,
as the problem we are solving is fundamentally no harder. The second row shows the computational time taken: left plot shows the total time taken whereas the right plot shows the time taken per iteration. We see that both of these scale sublinearly with the number of points. The plots agree with the theoretical results from earlier sections concerning the size of coreset and number of iterations required to obtain 
a solution.

\subsection{Multiway Graph Cuts}
In order to demonstrate that our algorithm is scalable to large datasets and hence it is practically applicable, we tested our algorithm on the \emph{dblife} dataset \cite{niu2011-hogwild} which consists of 9168 labels. 
Therefore, $D$ is the product of $n$ 9168-dimensional simplices. Note that this problem consist of \emph{approximately 20 million decision variables} and has been recently 
tackled via distributed schemes 
implemented on a cluster \cite{niu2011-hogwild}. Due to the large of number of labels, this dataset is ideal for evaluating our algorithm as the analysis implies that the number of labels chosen must \emph{increase linearly with the number of iterations}. This means that we get a sparse labeling scheme, i.e., the {\em coreset here is the subset of the total number of labels}. 
We evaluated this behavior by using increasing subsets of the total number of labels. We find that even when the total number of labels is small; this trend becomes more prominent as 
progressively larger sets of labels are used, 
shown in Figure \ref{fig:median-n} (row 3).

\begin{figure*}[!hbt]
  \vspace{-1em}
  \centering
    \subfloat{\includegraphics[width=0.35\linewidth]{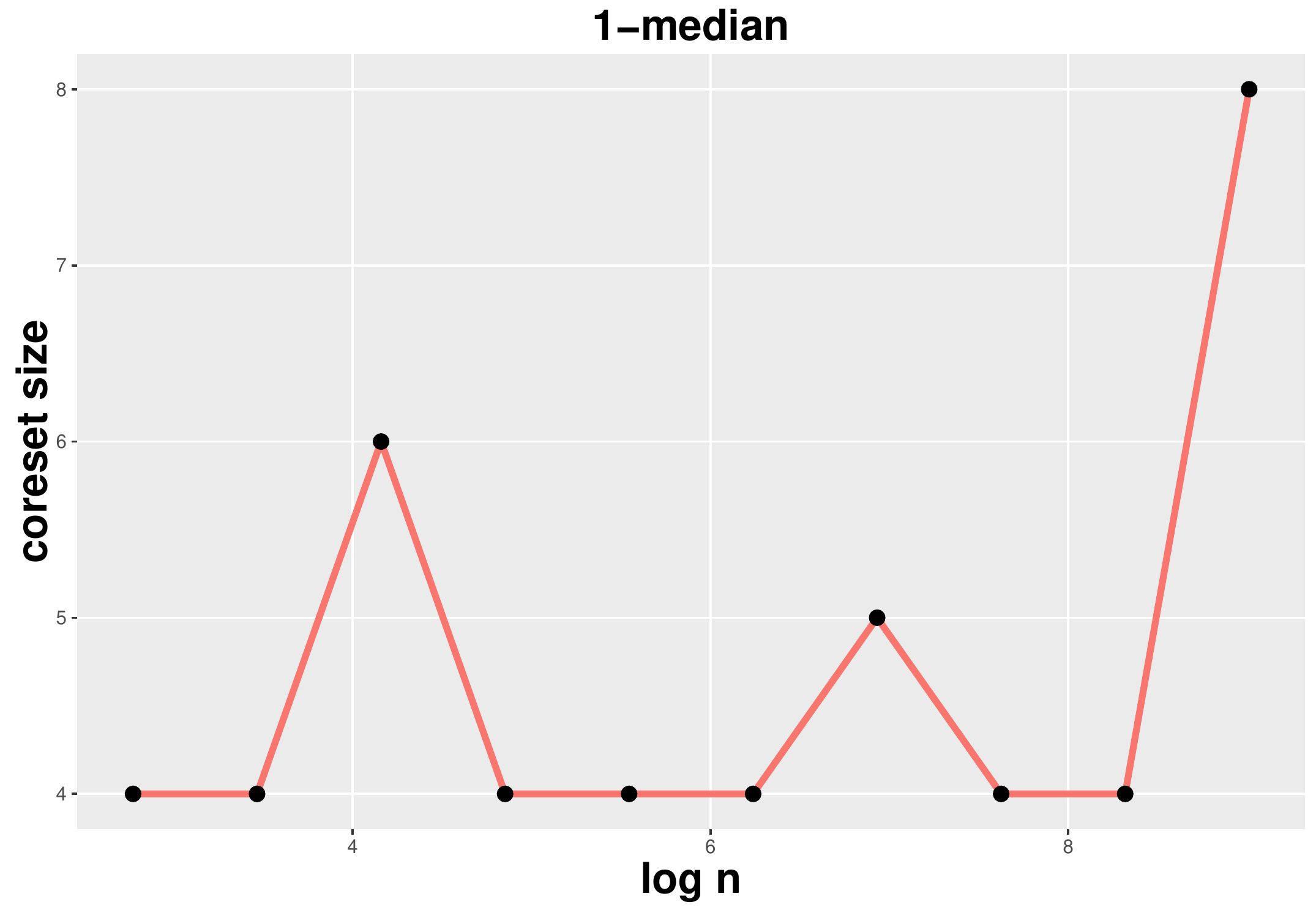}} 
    \subfloat{\includegraphics[width=0.35\linewidth]{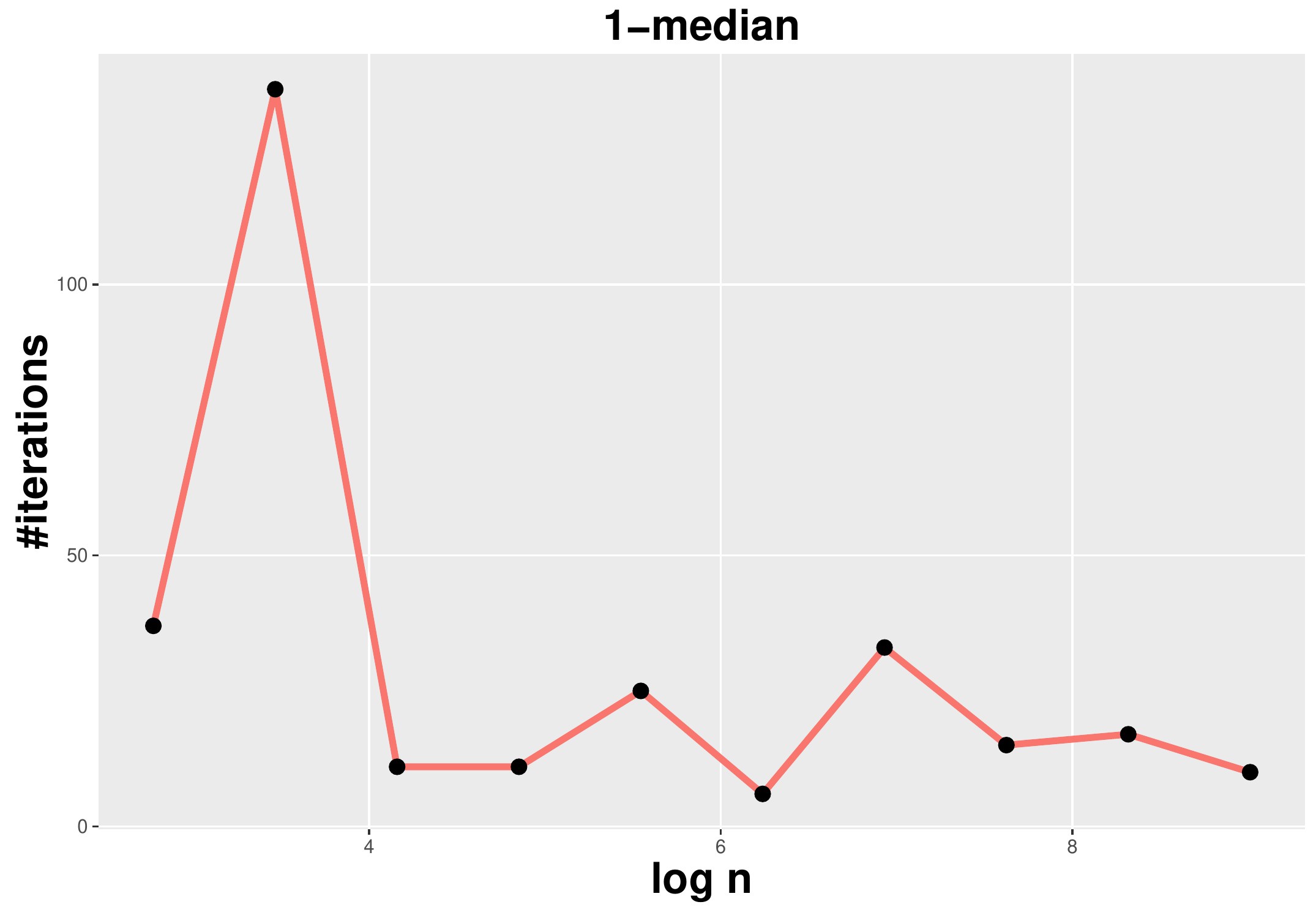}}
      ~~  \\
      \subfloat{\includegraphics[width=0.35\linewidth]{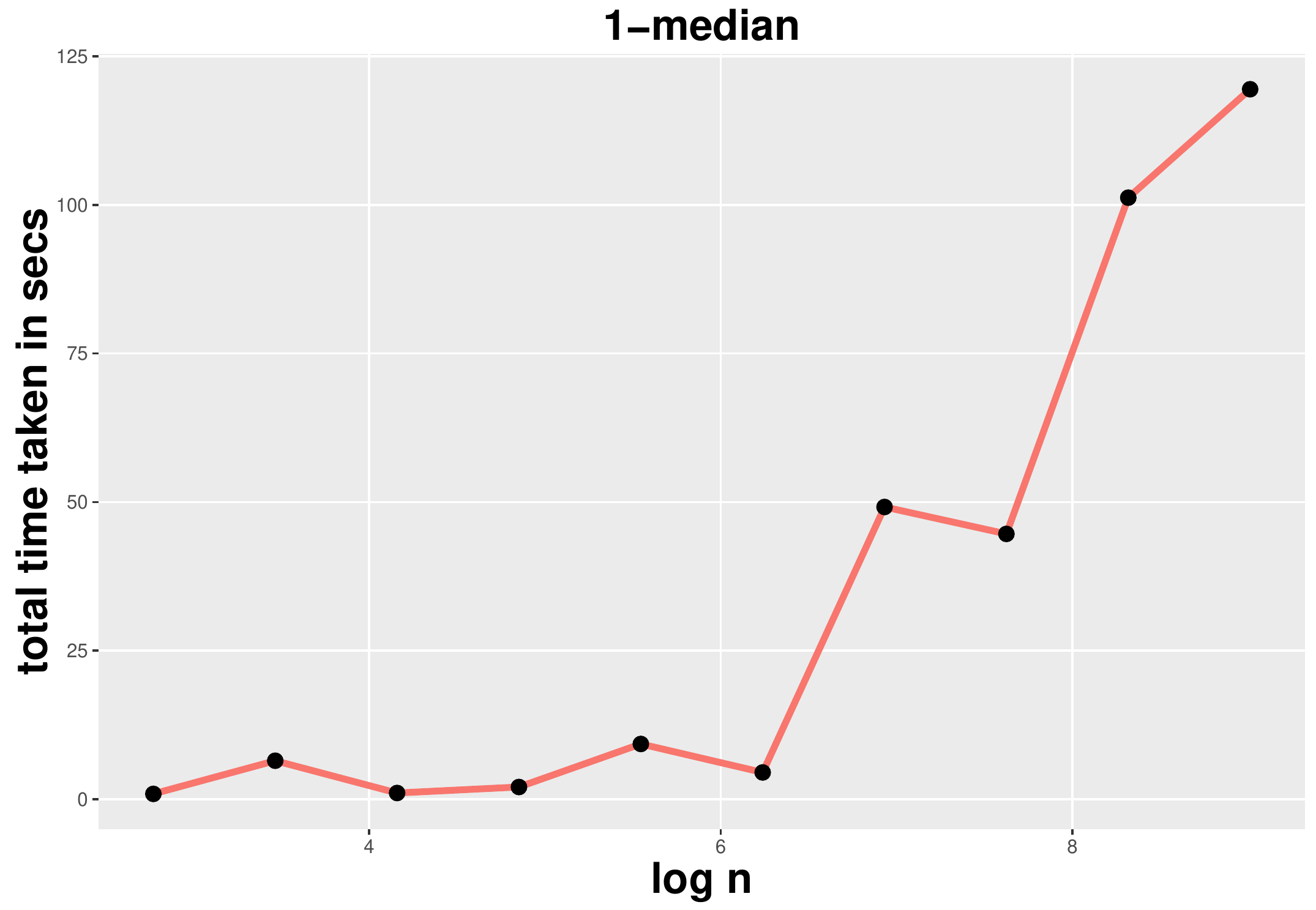}} 
    \subfloat{\includegraphics[width=0.35\linewidth]{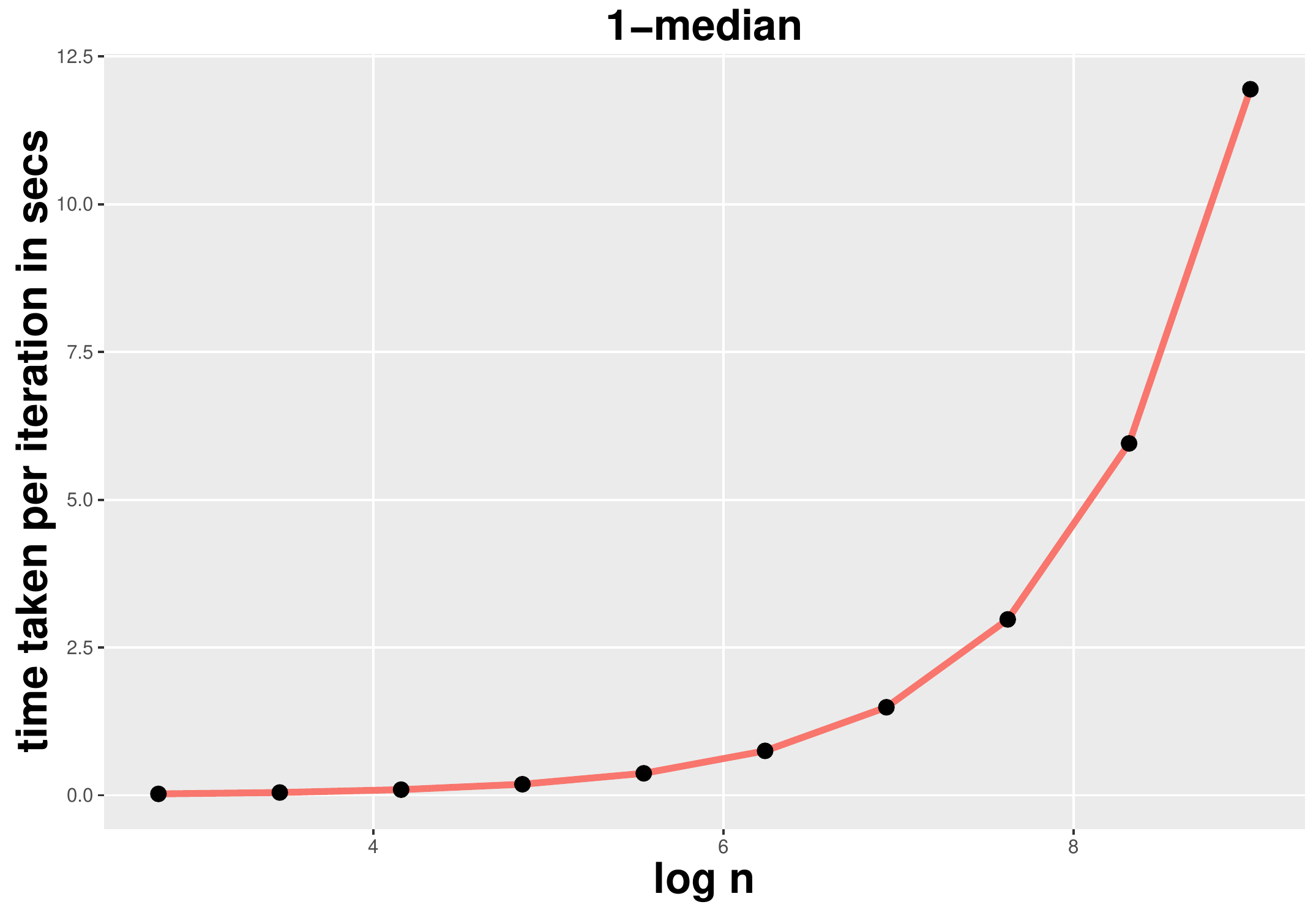}}
      ~~  \\
    \subfloat{\includegraphics[width=0.35\linewidth]{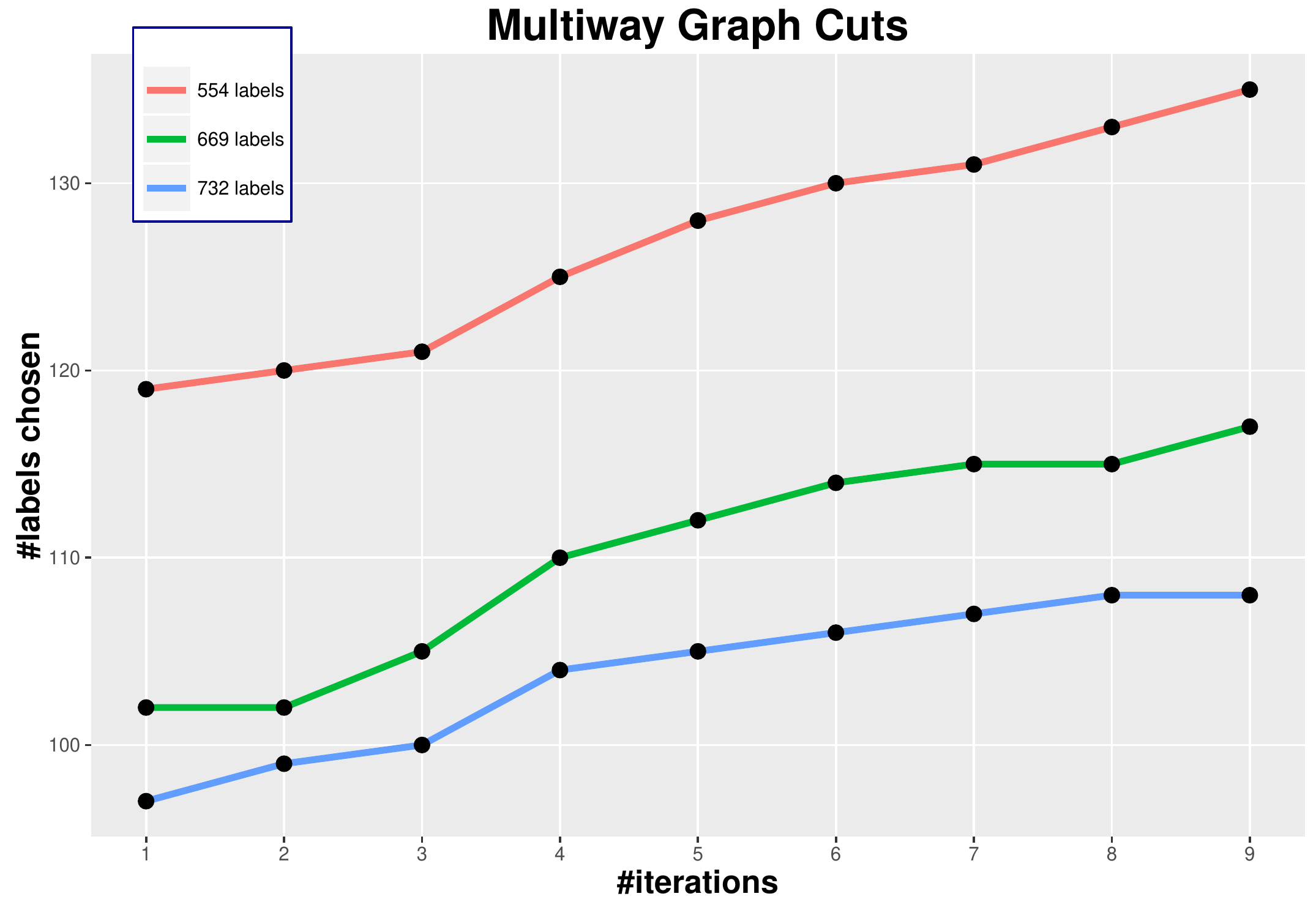}}
  \vspace{-8pt}
  \caption{ \label{fig:median-n}  (Top row)  Plotting the size of the coreset (left) and number of iterations (right) for 1-median as we vary the size $n$ of a randomly sampled set of points. We iterate until achieving a primal-dual gap of $10^{-6}$; $2^{\text{nd}}$ row  shows the total time taken (left) and time taken per iteration (right); $3^{\text{rd}}$ row shows results for Graph cuts problem on dblife. Each LP subproblem took approximately 60 seconds using CPLEX solver. }
\vspace{-1em}

\end{figure*}

\begin{table}[!h]
\begin{tabular}{|l|p{0.35\textwidth}|l|}
\hline
Problem & $C_f$ & Coreset\\                 
 \hline
$\ell_1$-SVM & $\frac{2}{\epsilon}(\text{diam}_{\infty}(A^+\Delta_m)+\text{diam}_{\infty}(A^{-}\Delta_m))$ & Support vectors \\
1-median     & $\frac{2}{\epsilon}\text{diam}_2(A\Delta_n)^2$                                                                   & Points \\
Graph cuts   & $\frac{2}{\epsilon}(\text{diam}_1(AD))^2$ & Labels \\
Balanced Development & $\frac{2}{\epsilon}( \text{diam}_1(A\Delta_m))^2$ &  Attributes\\
Sparse PCA with constraints &  $\frac{2}{\epsilon}( \text{diam}_{\infty}(AS))^2$ & Features\\
\hline
\end{tabular}
\caption{\label{tab:summary-tab} A summary of the relevant quantities derived for each problem described in the previous sections.
}
\end{table}

\subsection{ Interesting empirical behavior and its potential relation to existing work}
The experiments highlight two important practical aspects of the proposed algorithm. \\
\emph{A) How often do we encounter poor subproblems?}  The number of vertices in $|V(Ax, \epsilon)|$ given by our analysis seems to be an overestimate than what is practically observed. Assuming that we initialize at a point whose neighborhood does not contain any nonsmooth points, using the fact that the set of nondifferentiable points of a convex function on a compact set is of measure zero, we can show that the next iterate of the algorithm also does not contain any nonsmooth points in its neighborhood with high probability. But extending this argument to the next iterate becomes problematic. It seems like a Lovasz Local Lemma type result \cite{kolmogorov2015commutativity} will be able to shed some light on this behavior but the independence assumption is inapplicable, so one has to introduce randomness in the algorithm. 
But a separate sparsity analysis might have to be done to obtain coreset results.\\
\emph{B) Why does our algorithm converge so fast?} The empirical behavior of certain algorithms have been strongly associated with the \emph{manifold identification property} in recent works, see \cite{lee2012manifold}. Our algorithm when using a bisection line search strategy as shown in fig \ref{fig:median-n}, converges within a few iterations and is faster than the state of the art solvers for that problem. 
It will be interesting to see if our algorithm satisfies some variation or extension of the 
properties that are described in \cite{lee2012manifold}, but one issue is that the algorithm presented in \cite{lee2012manifold} 
has randomization incorporated at each iteration making it hard to adapt that analysis for our algorithm.\\
\emph{C) Tightness of convergence:} Let us consider the case of $\ell_1$-SVM. Assume that the positive and negative labeled instances be generated by a Gaussian distribution with mean $0$ and $\mu$ respectively and variance $\sigma I$. Then we have that, 
\begin{align}
\mathbb{E}\left(\text{diam}_{\infty}(A^+\Delta_m)+\text{diam}_{\infty}(A^-\Delta_m)\right)\leq \sigma\log m + \mu + \sigma \log m = 2\sigma\log m + \mu
\end{align}
using the properties of maximum of sub-gaussians. Here the expectation is taken over the randomness in the data: $A^{+},A^{-}$. This shows that the dependence of $C_f$ is logarithmic in the dimensions. Similarly we will get  $\sqrt{n}, n\log n$ for $\ell_2,\ell_1$ norm diameters respectively showing that the $\mathbb{E}C_f$ is near linear in the dimensions. Now we plug this estimate in the convergence result in theorem \eqref{thm:convergence}, 
\begin{align}
\mathbb{E}\left( f( x_k ) - f(x_*)\right) \le \mathbb{E}\left(\frac{2^{5/2} L + 2^{3/2} D_f}{\sqrt{k + 2}}\right) = 2^{5/2}\left(\frac{L+\mathbb{E}D_f}{\sqrt{k+2}}\right)=O\left(\frac{L+ \log m}{\sqrt{k}}\right)
\end{align} 
Hence we can see that by choosing an appropriate neighborhood depending on the application, we can get faster convergence. This aspect is either not present in other Frank Wolfe algorithms that solve nonsmooth problems or involve  a looser dependence on the constants involved for example, \cite{hazan2012projection} achieve a $O\left(\frac{L m}{\sqrt{k}}\right)$ convergence compared to our $O\left(\frac{L+\log^2 m}{\sqrt{k}}\right)$ here.
\section{Discussion}

We have presented novel coreset bounds and optimization schemes for nondifferentiable 
problems encountered in statistical machine learning and computer vision.
The  algorithm calculates
sparse approximate solutions to the corresponding optimization problems deterministically in a
number of iterations independent of the size of the input problem,
depending only on the approximation factor and the sum of the Lipschitz constant
and a nonlinearity term.
The central result in Theorem \ref{thm:convergence}  applies to
any problem of the very general form in \eqref{eq:primal-problem} with
mild conditions, potentially suggesting a number of other applications beyond those considered here. 
Though a general condition to characterize {\em all} cases where the internal subproblem is efficiently solvable may not be available, we show a broad/useful class that \emph{is} efficiently solvable.  

Finally, we point out a few technical properties that differentiate our method from existing FW type methods for nonsmooth problems \cite{argyriou2014hybrid, pierucci2014smoothing}.

{\em First}, many methods rely on smoothing the objective function by a \emph{prox} function that needs to be designed piecemeal for specific problems. 
While examples exist where these functions can be readily derived, to our knowledge there are no standard recipes for deriving proximal functions in general. 
{\em Second}, several existing methods assume that the proximal iteration can be solved efficiently. However, it is known that in general, the worst case complexity of a single proximal iteration 
is the same as solving the original optimization problem \cite{parikh2014proximal}. 
{\em Third}, it is not yet clear how coreset results can be derived for many existing methods, or if it is even possible. For example, (especially) in the \emph{very large scale} setting, 
coreset results enable practical applications where one can store a subset of the (training) dataset and still be able to perform nearly as well (on the test data). The basic expectation 
is that more data should \emph{not} always make a problem computationally harder. An exciting implication behind coreset results in \citep{clarkson2008-frank-wolfe} is that this can be avoided in certain cases. 
But \citep{clarkson2008-frank-wolfe} assumes that $f$ is smooth: an artifact of the optimization rather than a requirement of coresets per se --- we showed that this assumption is not necessary at all. 
In closing, 
while the algorithm may not be the defacto off-the-shelf option for {\em all} nonsmooth problems, 
for many problems it offers a very competitive (generally applicable) alternative, 
and in certain cases, the theory nicely translates into significant practical benefits as well.

\section{Acknowledgments}
The authors are supported by NSF CAREER RI \#1252725,  NSF CCF \#1320755, 
and \href{http://cpcp.wisc.edu/}{UW CPCP} (U54 AI117924). 
We thank Stephen J. Wright, Shuchi Chawla, Satyen Kale and Kenneth L. Clarkson
for comments and suggestions. 

\newpage
\bibliographystyle{plain}
\bibliography{nonsmooth_fw_jmlr}
\end{document}